\def\eqref#1{equation~(\ref{#1})}
\def\1{\bf{1}}
\def\vone{{\bf{1}}}
\def\fO{{\mathcal{O}}}
\def\fX{{\mathcal{X}}}
\def\sB{{\mathbb{B}}}
\def\BE{{\mathbb{E}}}
\def\BR{{\mathbb{R}}}
\def\BS{{\mathbb{S}}}
\theoremstyle{plain}
\newtheorem{thm}{Theorem}
\newtheorem{dfn}{Definition}
\newtheorem{lem}{Lemma}
\newtheorem{asm}{Assumption}
\newtheorem{cor}{Corollary}
\newtheorem{prop}{Proposition}
\def\Ddots{\mathinner{\mkern1mu\raise\p@
\vbox{\kern7\p@\hbox{.}}\mkern2mu
\raise4\p@\hbox{.}\mkern2mu\raise7\p@\hbox{.}\mkern1mu}}
\newcommand*{\rom}[1]{\expandafter\@slowromancap\romannumeral #1@}
\definecolor{bgcolor}{rgb}{0.93,0.99,1}
\icmltitlerunning{Faster Gradient-Free Algorithms for Nonsmooth Nonconvex Stochastic Optimization}
\begin{document}

\twocolumn[
\icmltitle{Faster Gradient-Free Algorithms for Nonsmooth Nonconvex Stochastic Optimization}
 
% It is OKAY to include author information, even for blind
% submissions: the style file will automatically remove it for you
% unless you've provided the [accepted] option to the icml2022
% package.

% List of affiliations: The first argument should be a (short)
% identifier you will use later to specify author affiliations
% Academic affiliations should list Department, University, City, Region, Country
% Industry affiliations should list Company, City, Region, Country

% You can specify symbols, otherwise they are numbered in order.
% Ideally, you should not use this facility. Affiliations will be numbered
% in order of appearance and this is the preferred way.
\icmlsetsymbol{equal}{*}

\begin{icmlauthorlist}
\icmlauthor{Lesi Chen}{sds}
\icmlauthor{Jing Xu}{iiis}
\icmlauthor{Luo Luo}{sds}
\end{icmlauthorlist}

\icmlaffiliation{sds}{School of Data Science, Fudan University, Shanghai, China}
\icmlaffiliation{iiis}{Institute for Interdisciplinary Information Sciences, Tsinghua University, Beijing, China}

\icmlcorrespondingauthor{Luo Luo}{luoluo@fudan.edu.cn}

% You may provide any keywords that you
% find helpful for describing your paper; these are used to populate
% the "keywords" metadata in the PDF but will not be shown in the document
\icmlkeywords{Machine Learning, ICML}

\vskip 0.3in
]

% this must go after the closing bracket ] following \twocolumn[ ...

% This command actually creates the footnote in the first column
% listing the affiliations and the copyright notice.
% The command takes one argument, which is text to display at the start of the footnote.
% The \icmlEqualContribution command is standard text for equal contribution.
% Remove it (just {}) if you do not need this facility.

\printAffiliationsAndNotice{}  % leave blank if no need to mention equal contribution
%\printAffiliationsAndNotice{\icmlEqualContribution} % otherwise use the standard text.

\begin{abstract}
We consider the optimization problem of the form $\min_{x \in \mathbb{R}^d} f(x) \triangleq \mathbb{E}_{\xi} [F(x; \xi)]$, where the component $F(x;\xi)$ is $L$-mean-squared Lipschitz but possibly nonconvex and nonsmooth.
% and $\xi$ is the random index.
The recently proposed gradient-free method requires at most $\mathcal{O}( L^4 d^{3/2} \epsilon^{-4} + \Delta L^3 d^{3/2}  \delta^{-1} \epsilon^{-4})$ stochastic zeroth-order oracle complexity to find a
$(\delta,\epsilon)$-Goldstein stationary point of objective function, where~$\Delta = f(x_0) - \inf_{x \in \mathbb{R}^d} f(x)$ and $x_0$ is the initial point of the algorithm. 
This paper proposes a more efficient algorithm using stochastic recursive gradient estimators, which improves the complexity to~$\mathcal{O}(L^3 d^{3/2}  \epsilon^{-3}+ \Delta L^2 d^{3/2} \delta^{-1} \epsilon^{-3})$.
\end{abstract}

% In the unusual situation where you want a paper to appear in the
% references without citing it in the main text, use \nocite
%\nocite{langley00}

\section{Introduction}

We study the stochastic optimization problem 
\begin{align} \label{prob:main}
    \min_{x \in \BR^d} f(x) \triangleq \mathbb{E}_{\xi} [F(x; \xi)],
\end{align}
where the stochastic component $F(x;\xi)$, indexed by random variable $\xi$, is possibly nonconvex and nonsmooth.  
We focus on tackling the problem with Lipschitz continuous objective, which arises in many popular applications including 
simulation optimization~\cite{hong2015discrete,nelson2010optimization},
deep neural networks~\cite{nair2010rectified,glorot2011deep,chen2017zoo,ye2018hessian}, statistical learning~\cite{fan2001variable,zhang2006gene,zhang2010nearly,mazumder2011sparsenet,zhang2010analysis},  reinforcement learning \citep{mania2018simple,suh2022differentiable,choromanski2018structured,jing2021asynchronous}, financial risk minimization~\cite{stadtler2008supply} and supply chain management~\cite{duffie2010dynamic}.
%\citep{bian2015complexity,majewski2018analysis,davis2020stochastic}. 

The Clarke subdifferential~\cite{clarke1990optimization} for locally Lipschitz continuous function is a natural extension of gradients for smooth function and subdifferentials for convex functions.
Unfortunately,  hard instances suggest that finding a (near)~$\epsilon$-stationary point in terms of Clarke subdifferential is computationally intractable~\cite{zhang2020complexity,kornowski2021oracle}. 
\citet{zhang2020complexity} addressed this issue by proposing the notion of $(\delta,\epsilon)$-Goldstein
stationarity as a valid criterion for non-asymptotic convergence analysis in nonsmooth nonconvex optimization, which considers the convex hull of all Clarke subdifferential at points in the~$\delta$-ball neighbourhood~\cite{goldstein1977optimization}.
They also proposed the stochastic interpolated normalized
gradient descent method (SINGD) for finding a $(\delta,\epsilon)$-Goldstein stationary point of Hadamard directionally differentiable objective, which has  a stochastic first-order oracle complexity of $\fO(\Delta L^3  \delta^{-1} \epsilon^{-4})$, where $L$ is the  Lipschitz constant of the objective, $\Delta = f(x_0) - \inf_{x \in \BR^d} f(x)$ and $x_0$ is the initial point of the algorithm. In the case where both exact function value and gradient oracle are available, \citet{davis2021gradient} proposed the perturbed stochastic interpolated normalized gradient descent (PINGD) to relax the Hadamard directionally differentiable assumption.
%They also showed that an exact oracle complexity of $\tilde \fO(\Delta L d~ \delta^{-1} \epsilon^{-2})$ can be obtained by a cutting plane algorithm.
Later, \citet{tian2022finite} proposed the perturbed stochastic interpolated normalized gradient descent (PSINGD) for stochastic settings. 
%\citet{tian2022finite} and \citet{davis2021gradient} showed the Hadamard directionally differentiable assumption can be dropped  by iterating with small perturbation. %\citep{davis2021gradient} combined gradient sampling method and proposed INGD-L which converges faster with non-stochastic oracles in low dimensions.
%The lower bounds for this task
%of the algorithms with exact oracles 
%have also been studied~\citep{tian2021hardness,kornowski2021oracle,jordan2022complexity,kornowski2022complexity}.
Very recently, in a concurrent work \citet{cutkosky2023optimal} presented an algorithm with a stochastic first-order oracle complexity of $\fO(\Delta L^2 \delta^{-1} \epsilon^{-3} )$ via a novel ``Online-to-Non-convex Conversion'' (O2NCC), and they also showed the optimality of their method by constructing a matching lower complexity bound. Besides, there also exist many works focusing on the complexity analysis under the special case when the stochastic first-order oracle is noiseless \cite{tian2021hardness,kornowski2021oracle,jordan2022complexity,kornowski2022complexity,davis2021gradient,tian2022no}. Remarkably, the hard instance constructed by \citet{tian2022no} shows that there is no deterministic algorithm that can compute a $(\delta,\epsilon)$-Goldstein stationary point in finite-time within dimension-free complexity. Besides, \citet{tian2022no} also proves that any deterministic finite-time algorithm for computing a $(\delta,\epsilon)$-Goldstein stationary point cannot be general zero-respecting, which rules out many commonly used first and second-order algorithms.

For many real applications~\citep{hong2015discrete,nelson2010optimization,chen2017zoo,ye2018hessian,stadtler2008supply,duffie2010dynamic,mania2018simple}, accessing the first-order oracle may be extremely expensive or even impossible. 
The randomized smoothing method is a well-known approach to designing zeroth-order algorithms~~\cite{nesterov2017random,ghadimi2013stochastic}.
It approximates the first-order oracle using the difference of function values.
Most of the existing works on zeroth-order optimization focus on convex problems~\cite{nesterov2017random,shamir2017optimal,duchi2015optimal,bubeck2012towards,duchi2015optimal,shamir2013complexity,bach2016highly} and smooth nonconvex problems~\cite{nesterov2017random,ghadimi2013stochastic,liu2018zeroth,ji2019improved,fang2018spider}.
Recently, \citet{lin2022gradient} considered the nonsmooth nonconvex case by establishing the relationship between Goldstein subdifferential~\cite{goldstein1977optimization} and randomized smoothing~\cite{nesterov2017random,shamir2017optimal}. As a consequence, they showed the gradient-free method (GFM)
% \cite{shamir2017optimal} 
could find a $(\delta,\epsilon)$-Goldstein stationary point within at most~$\mathcal{O}( L^4 d^{3/2} \epsilon^{-4} + \Delta L^3 d^{3/2}  \delta^{-1} \epsilon^{-4})$ stochastic zeroth-order oracle calls.

\begin{table*}[t]
    \centering
    \caption{We present the complexities of finding a $(\delta,\epsilon)$-Goldstein stationary point for $d$-dimensional $L$-Lipschitz objective under both deterministic and stochastic settings, where we denote $\Delta=f(x_0)-f^*$ and $x_0$ is the initial point of the algorithm.
    We remark for ``*'' that the algorithms proposed by \citet{zhang2020complexity} use the non-standard first-order oracle called the Hadamard directional derivative. }\vskip0.1cm
    \label{tab:res}    
    \begin{tabular}{ccccc}
    \hline
    Setting & Oracle & Method    & Reference & {Complexity}   \\
    \hline \hline \addlinespace
    & ~~1st* & SINGD & \citet
    {zhang2020complexity} & $\tilde \fO\left(
    \dfrac{\Delta L^3 }{\delta \epsilon^4}\right)$   \\ \addlinespace
     & 1st  & PSINGD    & \citet{tian2022finite}  & $\tilde \fO\left(\dfrac{\Delta L^3 }{\delta \epsilon^4}\right)$  \\ \addlinespace
     Stochastic & 1st & O2NCC    & \citet{cutkosky2023optimal}  & $\fO\left(\dfrac{\Delta L^2 }{\delta \epsilon^3}\right)$  \\ \addlinespace
    &0th & GFM & \citet{lin2022gradient} &  $\fO\left(d^{3/2} \left(
    \dfrac{  L^4 }{\epsilon^4}
    + \dfrac{ \Delta L^3 }{\delta \epsilon^4}\right) \right)$  \\ \addlinespace
    \rowcolor{gray!20} & 0th & ~~GFM$^+$ &Theorem \ref{thm:GFM+} & $\fO\left(d^{3/2} \left(
    \dfrac{  L^3 }{\epsilon^3}
    + \dfrac{ \Delta L^2 }{\delta \epsilon^3}\right) \right)$ \\
    \addlinespace
    \hline
    \addlinespace
     & ~~0th \& 1st*  & INGD & \citet{zhang2020complexity} & $\tilde \fO\left(\dfrac{\Delta L^2 }{\delta \epsilon^3}\right)$   \\ \addlinespace
      Deterministic & 0th \& 1st  & PINGD    & \citet{davis2021gradient} & $\tilde \fO\left(\dfrac{\Delta L^2 }{\delta \epsilon^3}\right)$  \\
     \addlinespace
    & 0th \& 1st & cutting plane    & \citet{davis2021gradient}  & $\tilde \fO\left(\dfrac{ \Delta L d}{\delta \epsilon^2}\right)$   \\ \addlinespace
    \hline 
    \end{tabular}
\end{table*}

\begin{table*}[t]
    \centering
    \caption{We present the complexities of finding a $(\delta,\epsilon)$-Goldstein stationary point for $d$-dimensional $L$-Lipschitz objectives with zeroth-order oracles under the stochastic setting, where we denote  $R= {\rm dist} (x_0, \fX^*)$ and $x_0$ is the initial point of the algorithm.}\vskip0.25cm
    \label{tab:res-convex}    
    \begin{tabular}{ccc}
    \hline
    Method & Reference & Complexity \\
    \hline \hline \addlinespace
    GFM & \citet{lin2022gradient} & $\fO \left( \dfrac{d^{3/2} L^4}{\epsilon^4} + \dfrac{d^{3/2}~  L^4~ R~ }{\delta\epsilon^4}  \right)$ \\ \addlinespace
     WS-GFM & Theorem \ref{thm:2GFM}  & ~~~~~$\mathcal{O}\left( \dfrac{d^{3/2} L^4}{\epsilon^4} + {\dfrac{d^{4/3}  L^{8/3}  R^{2/3}}{\delta^{2/3} \epsilon^{4/3}}} \right)$ \\ \addlinespace
    ~~GFM${}^+$ & Theorem \ref{thm:GFM+} & $\fO\left( \dfrac{d^{3/2} L^3}{\epsilon^3} + \dfrac{d^{3/2}~ L^3~R~}{\delta \epsilon^3~}  \right)$ \\ \addlinespace
    ~~WS-GFM${}^+$ & Theorem \ref{thm:2GFM+} &  ~~~$\mathcal{O}\left( \dfrac{d^{3/2} L^3}{\epsilon^3} + \dfrac{d^{4/3}  L^2 R^{2/3} }{\delta^{2/3} \epsilon^2} \right)$ \\ \addlinespace
    \hline 
    \end{tabular}
\end{table*}

In this paper, we propose an efficient stochastic gradient-free method named GFM$^+$ for nonsmooth nonconvex stochastic optimization. 
The algorithm takes advantage of randomized smoothing to construct stochastic recursive gradient estimators~\cite{nguyen2017sarah,fang2018spider,li2021page,li2019ssrgd} for the smoothed surrogate of the objective. 
It achieves a stochastic zeroth-order oracle complexity of $\fO( L^3 d^{3/2} \epsilon^{-3}+ \Delta L^2 d^{3/2} \delta^{-1} \epsilon^{-3})$ for finding a $(\delta,\epsilon)$-Goldstein stationary point, improving the dependency both on $L$ and~$\epsilon$ compared with GFM~\cite{lin2022gradient}.
In the case of $\delta L \lesssim \Delta$, the dominating term in the zeroth-order oracle complexity 
of GFM$^+$  becomes $\fO(d^{3/2} \Delta L^2 \delta^{-1} \epsilon^{-3} )$, which matches the optimal rate of the first-order method O2NCC \cite{cutkosky2023optimal} except an unavoidable additional dimensional factor due to we can only access the zeroth order oracles~\cite{duchi2015optimal}.
We summarize the results of this paper and related works in Table~\ref{tab:res}, where the deterministic setting refers to the case that both the exact zeroth-order and first-order oracle are available. 

We also extend our results to nonsmooth convex optimization.
%consider the problem of finding $(\delta,\epsilon)$-Goldstein stationary points for convex functions. 
The optimality of the zeroth-order algorithms to minimize convex function in the measure of function value has been established by \citet{shamir2017optimal}, while the goal of finding approximate stationary points is much more challenging \citep{allen2018make,nesterov2012make,lee2021geometric}. 
The lower bound provided by \citet{kornowski2022complexity} suggests finding a point with a small subgradient for Lipschitz convex objectives is intractable.
Hence, finding an approximate Goldstein stationary point is also a more reasonable task in convex optimization. We propose the two-phase gradient-free methods to take advantage of the convexity. It shows that GFM$^+$ with warm-start strategy could find a $(\delta,\epsilon)$-Goldstein stationary point within~$\mathcal{O}\left( L^3d^{3/2}\epsilon^{-3} + R L^2 d^{4/3} \delta^{-2/3} \epsilon^{-2}\right)$ stochastic zeroth-order oracle complexity, where $R$ is the distance of the initial point to the optimal solution set.
We summarize the results for convex case in Table \ref{tab:res-convex}.

\section{Preliminaries} \label{sec:pre}

In this section, we introduce the background for nonsmooth nonconvex function and randomized smoothing technique in zeroth-order optimization. 

%In this section, we provide brief review for necessary background of the optimization for stochastic nonsmooth nonconvex Lipschitz function class with zeroth-order oracle. 

%After that, we  illustrate the basic idea behind this paper to facilitate the reader's understanding.

\subsection{Notation and Assumptions}
Throughout this paper, we use  $\Vert \cdot \Vert$ to represent the Euclidean norm of a vector. 
$\sB_{\delta}(x)\triangleq \{y \in \BR^d: \Vert y-x \Vert \le \delta\}$ represents the Euclidean ball centered at point $x$ with radius~$\delta$. 
We also define $\sB = \sB_{1}(0)$ and $\BS \triangleq \{x \in \BR^d: \Vert x \Vert =1\}$ as the unit Euclidean ball and sphere centered at origin respectively. 
We let ${\rm conv} \{A\}$ be the convex hull of set~$A$ and ${\rm dist}(x,A) = \inf_{y \in A } \Vert x - y \Vert $ be the distance between vector $x$ and set $A$. When measuring complexity, we use the notion $\tilde \fO(\,\cdot\,)$ to hide  the logarithmic factors.

Following the standard setting of stochastic optimization, we assume the objective $f$ can be expressed as an expectation of some stochastic components. %This formulation includes the classic expected risk minimization model.

\begin{asm} \label{asm:stoc}
We suppose that the objective function has the form of~$f(x) = \BE_{\xi} [F(x;\xi)]$, where $\xi$ denotes the random index.
\end{asm}
We focus on the Lipschitz function with finite infimum, which satisfies the following assumptions.
\begin{asm} \label{asm:Lip}
We suppose that the stochastic component $F(\,\cdot\,;\xi):\BR^d\to\BR$ is $L(\xi)$-Lipschitz for any $\xi$, i.e. it holds that
\begin{align} \label{eq:eveLip}
    \Vert F(x;\xi) - F(y;\xi) \Vert \le L(\xi) \Vert x - y \Vert
\end{align}
for any $x,y \in \BR^d$ and $L(\xi)$ has bounded second-order moment, i.e. there exists some constant $L>0$ such that 
\begin{align*}
    \BE_{\xi} \big[L(\xi)^2\big] \le L^2.
\end{align*}
\end{asm}

\begin{asm} \label{asm:lower}
We suppose that  the objective $f:\BR^d\to\BR$ is lower bounded and define 
\begin{align*}
f^* := \inf_{x \in \BR^d}f(x).
\end{align*}
%\begin{align*}
%    \Delta := f(x_0) -  f^* < \infty
%\end{align*}
%where $f^* := \inf_{x \in \BR^d}f(x)$ is the global infimum value of $f$.
\end{asm}

The inequality (\ref{eq:eveLip}) in Assumption \ref{asm:Lip} implies the mean-squared Lipschitz continuity of $F(x;\xi)$. 
\begin{prop}[mean-squared continuity] \label{prop:msLip}
Under Assumption \ref{asm:Lip}, for any $x, y \in \BR^d$ it holds that
\begin{align*}
    \BE_{\xi} \Vert F(x;\xi) - F(y;\xi) \Vert^2 \le L^2 \Vert x - y \Vert^2.
\end{align*}
\end{prop}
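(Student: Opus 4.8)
The plan is to simply square the pointwise Lipschitz inequality from Assumption~\ref{asm:Lip} and integrate over~$\xi$. Concretely, fix $x,y\in\BR^d$. By~\eqref{eq:eveLip}, for every realization of the random index $\xi$ we have $\Vert F(x;\xi)-F(y;\xi)\Vert \le L(\xi)\Vert x-y\Vert$. Since both sides are nonnegative, squaring preserves the inequality, so $\Vert F(x;\xi)-F(y;\xi)\Vert^2 \le L(\xi)^2\Vert x-y\Vert^2$ holds pointwise in~$\xi$.

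Next I would take expectation with respect to~$\xi$ on both sides. Monotonicity of the expectation gives $\BE_\xi\Vert F(x;\xi)-F(y;\xi)\Vert^2 \le \BE_\xi\big[L(\xi)^2\big]\,\Vert x-y\Vert^2$, where $\Vert x-y\Vert^2$ is deterministic and pulls out of the expectation. Finally, invoking the bounded second-moment condition $\BE_\xi[L(\xi)^2]\le L^2$ from Assumption~\ref{asm:Lip} yields $\BE_\xi\Vert F(x;\xi)-F(y;\xi)\Vert^2 \le L^2\Vert x-y\Vert^2$, which is exactly the claimed bound.

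There is essentially no substantive obstacle here: the only point worth a moment's care is that the argument should be carried out pointwise in~$\xi$ \emph{before} taking expectations, so that the nonlinear (squaring) step is applied to a genuine inequality rather than to something already averaged; applying Jensen in the wrong direction would fail. One might also remark that measurability of $\xi\mapsto\Vert F(x;\xi)-F(y;\xi)\Vert^2$ is inherited from the standing stochastic-optimization setup in Assumption~\ref{asm:stoc}, so the expectations above are well defined.
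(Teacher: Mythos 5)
Your argument is correct and is exactly the (essentially trivial) derivation the paper intends when it says Assumption~\ref{asm:Lip} ``implies'' mean-squared Lipschitz continuity: square the pointwise inequality~(\ref{eq:eveLip}), take expectation over $\xi$, and invoke $\BE_\xi[L(\xi)^2]\le L^2$. The paper omits a formal proof, so there is nothing to reconcile beyond noting your version spells out the same one-line reasoning carefully.
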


All of above assumptions follow the setting of \citet{lin2022gradient}. We remark that Assumption \ref{asm:Lip} is weaker than assuming each stochastic component $F(\,\cdot\,;\xi)$ is $L$-Lipschitz.

\subsection{Goldstein Stationary Point}

According to Rademacher's theorem, a Lipschitz function is differentiable almost everywhere. Thus, we can define the Clarke subdifferential \citep{clarke1990optimization} as well as approximate Clarke stationary point as follows.

\begin{dfn}[Clarke subdifferential] \label{dfn:Clk}
The Clarke subdifferential of  a Lipschitz function $f$ at a point $x$ is defined by
\begin{align*}
    \partial f(x):=\operatorname{conv}\left\{ g: g = \lim_{x_k \rightarrow x} \nabla f(x_k)\right\}.
\end{align*}
\end{dfn}

\begin{dfn}[Approximate Clarke stationary point]
Given a Lipschitz function $f$, we say the point $x$ is an $\epsilon$-Clarke stationary point of $f$ if it holds that
\begin{align*}
    {\rm dist}(0, \partial f(x))  \leq \epsilon.
\end{align*}
\end{dfn}
However, finding an $\epsilon$-Clarke stationary point 
is computationally intractable~\citep{zhang2020complexity}. 
Furthermore, finding a point that is $\delta$-close to an $\epsilon$-stationary point may have an inevitable exponential dependence on the problem dimension~\citep{kornowski2021oracle}. 
As a relaxation, we pursue the $(\delta,\epsilon)$-Goldstein stationary point~\citep{zhang2020complexity}, whose definition is based on the following Goldstein subdifferential~\citep{goldstein1977optimization}. 

\begin{dfn}[Goldstein subdifferential] \label{dfn:Gold}
Given $\delta >0$, the Goldstein $\delta$-subdifferential of aLipschitz function $f$ at a point $x$ is defined by
\begin{align*}
    \partial_{\delta} f(x):=\operatorname{conv}\left\{\cup_{y \in \mathbb{B}_{\delta}(x)} \partial f(y)\right\},
\end{align*}
where $\partial f(x)$ is the Clarke subdifferential.
\end{dfn}

The $(\delta,\epsilon)$-Goldstein stationary point~\citep{zhang2020complexity} is formally defined as follows.
\begin{dfn}[Approximate Goldstein stationary point] \label{dfn:Goldsta}
Given a Lipschitz function $f$, we say the point $x$ is a $(\delta,\epsilon)$-Goldstein stationary point of $f$ if it holds that
\begin{align} \label{eq:delta-eps}
    {\rm dist}(0, \partial_{\delta} f(x))  \leq \epsilon.
\end{align}
% For random $x \in \BR^d$, we say $x$ is a stochastic $(\delta,\epsilon)$-Goldstein stationary point  if (\ref{eq:delta-eps}) holds in expectation.
\end{dfn}

%\noindent And the goal of an efficient algorithm is to find a stochastic  $(\delta,\epsilon)$-stationary point within finite time.
Our goal is to design efficient stochastic algorithms to find a $(\delta,\epsilon)$-Goldstein stationary point in expectation.

\subsection{Randomized Smoothing}

%\citet{jordan2022complexity} shown that randomization and zeroth-order oracle are both necessary for finding $(\delta,\epsilon)$-Goldstein stationary points. 

Recently, \citet{lin2022gradient} established the relationship between uniform smoothing and Goldstein subdifferential. 
We first present the definition of the smoothed surrogate.

\begin{dfn}[uniform smoothing] \label{dfn:f-delta}
Given a Lipschitz function $f$, we denote its smoothed surrogate as
\begin{align*}
    f_{\delta}(x) := \BE_{u\sim\mathcal P}[f(x + \delta u)],
\end{align*} 
where $\mathcal P$ is the uniform distribution on unit ball $\sB$.
\end{dfn}

The smoothed surrogate has the following properties~\citep[Proposition 2.3 and Theorem 3.1]{lin2022gradient}.

\begin{prop}\label{prop:smooth}
Suppose that function $f:\BR^d\to\BR$ is $L$-Lipschitz, then it holds that:
\begin{itemize}
    \item  $\vert f_{\delta}(\,\cdot\,) - f(\,\cdot\,) \vert \le \delta L$.
    \item $f_{\delta}(\,\cdot\,)$ is $L$-Lipschitz.
    \item $f_{\delta}(\,\cdot\,)$ is differentiable and with $c \sqrt{d}L\delta^{-1}$-Lipschitz gradient for some numeric constant $c>0$. 
    \item  $\nabla f_{\delta}(\,\cdot\,) \in \partial_{\delta} f(\,\cdot\,)$, where $\partial_{\delta} f(\,\cdot\,)$ is the Goldstein subdifferentiable.
\end{itemize}
\end{prop}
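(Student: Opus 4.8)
The plan is to verify the four assertions in turn; the first two are one-line consequences of Jensen's inequality, the fourth follows from the gradient formula, and essentially all of the content sits in the third. For the first assertion, write $f_\delta(x)-f(x)=\BE_{u\sim\mathcal P}[f(x+\delta u)-f(x)]$, move the absolute value inside the expectation, and bound $|f(x+\delta u)-f(x)|\le L\|\delta u\|\le L\delta$ using $\|u\|\le 1$. The second is the same computation applied to $f_\delta(x)-f_\delta(y)=\BE_{u\sim\mathcal P}[f(x+\delta u)-f(y+\delta u)]$, which is $\le L\|x-y\|$.

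For differentiability I would start from $f_\delta(x)=\frac{1}{\mathrm{vol}(\sB)}\int_{\sB}f(x+\delta u)\,du$ and differentiate under the integral sign: since $f$ is Lipschitz all difference quotients are bounded by $L$, dominated convergence applies, and Rademacher's theorem makes $\nabla f(x+\delta u)$ defined for a.e.\ $u$, so $\nabla f_\delta(x)=\BE_{u\sim\mathcal P}[\nabla f(x+\delta u)]$. Applying the Gauss--Green divergence theorem componentwise to the Lipschitz integrand $u\mapsto f(x+\delta u)$ on the unit ball then produces the boundary representation
\begin{align*}
  \nabla f_\delta(x)=\frac{d}{\delta}\,\BE_{w\sim\mathrm{Unif}(\BS)}\big[f(x+\delta w)\,w\big],
\end{align*}
where $\mathrm{Unif}(\BS)$ is the uniform law on the unit sphere and we used $\mathrm{area}(\BS)/\mathrm{vol}(\sB)=d$; its right-hand side is continuous in $x$, so $f_\delta\in C^1$.

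The heart of the proof is the gradient-Lipschitz bound, for which I would use the boundary representation. By linearity,
\begin{align*}
  \nabla f_\delta(x)-\nabla f_\delta(y)=\frac{d}{\delta}\,\BE_{w\sim\mathrm{Unif}(\BS)}\big[\big(f(x+\delta w)-f(y+\delta w)\big)\,w\big],
\end{align*}
so testing against an arbitrary unit vector $e$ and using $|f(x+\delta w)-f(y+\delta w)|\le L\|x-y\|$ gives
\begin{align*}
  \big|\langle e,\,\nabla f_\delta(x)-\nabla f_\delta(y)\rangle\big|\le\frac{dL\|x-y\|}{\delta}\,\BE_{w\sim\mathrm{Unif}(\BS)}\big[|\langle e,w\rangle|\big].
\end{align*}
The key fact is that a one-dimensional marginal of the uniform law on $\BS\subset\BR^d$ satisfies $\BE[\langle e,w\rangle^2]=1/d$ by symmetry, hence $\BE[|\langle e,w\rangle|]\le 1/\sqrt d$ by Jensen; taking the supremum over $e$ yields $\|\nabla f_\delta(x)-\nabla f_\delta(y)\|\le\sqrt d\,L\delta^{-1}\|x-y\|$, i.e.\ the claim with $c=1$. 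This is precisely the place where the dimension enters as $\sqrt d$ rather than the naive $d$ one gets from bounding $\BE[|\langle e,w\rangle|]$ by $1$.

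Finally, for the Goldstein inclusion I would return to $\nabla f_\delta(x)=\BE_{u\sim\mathcal P}[\nabla f(x+\delta u)]$: for a.e.\ $u$ with $\|u\|\le 1$ the point $x+\delta u$ lies in $\sB_\delta(x)$, so $\nabla f(x+\delta u)\in\partial f(x+\delta u)\subseteq\partial_{\delta}f(x)$ by Definitions~\ref{dfn:Clk} and~\ref{dfn:Gold}. Since $f$ is $L$-Lipschitz, the Clarke subdifferential is compact-valued, upper semicontinuous, and uniformly bounded by $L$, so $\cup_{y\in\sB_\delta(x)}\partial f(y)$ is compact and $\partial_{\delta}f(x)$ is compact and convex; the expectation of a measurable selection valued in a closed convex set again lies in that set (test against each supporting halfspace), giving $\nabla f_\delta(x)\in\partial_{\delta}f(x)$. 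I expect the only genuine obstacle to be the third bullet — namely making the Stokes/boundary representation of $\nabla f_\delta$ rigorous for merely Lipschitz $f$, and then extracting the $\Theta(1/\sqrt d)$ magnitude of the sphere marginal in order to replace the trivial factor $d$ by $\sqrt d$. Everything else is routine bookkeeping with expectations and the definitions of the Clarke and Goldstein subdifferentials.
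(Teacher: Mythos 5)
Your proposal is correct, and it is essentially the standard argument: the paper itself does not prove this proposition but imports it from \citet{lin2022gradient} (their Proposition~2.3 and Theorem~3.1, building on Yousefian et al.\ and Flaxman et al.), and your route — Jensen for the first two bullets, the divergence-theorem (sphere) representation $\nabla f_{\delta}(x)=\tfrac{d}{\delta}\,\E_{w\sim\mathrm{Unif}(\BS)}[f(x+\delta w)w]$ combined with $\E|\langle e,w\rangle|\le 1/\sqrt{d}$ for the gradient-Lipschitz bound, and the expectation-in-a-compact-convex-set argument for $\nabla f_{\delta}(x)=\E_u[\nabla f(x+\delta u)]\in\partial_{\delta}f(x)$ — is exactly the one used in those cited proofs. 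The only points requiring extra care (validity of differentiation under the integral and of Gauss–Green for merely Lipschitz $f$, handled via weak gradients/mollification, and measurability of $u\mapsto\nabla f(x+\delta u)$) are ones you already flag, so there is no substantive gap; an equivalent alternative for the third bullet, also common in the literature, bounds $\|\E_u[\nabla f(x+\delta u)]-\E_u[\nabla f(y+\delta u)]\|$ by $L$ times the relative volume of the symmetric difference of the two $\delta$-balls, which likewise yields the $\sqrt{d}L/\delta$ rate.
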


\citet{flaxman2004online} showed that we can obtained an unbiased estimate of $\nabla f_{\delta}(x)$ by using the two function value evaluations of points sampled on unit sphere $\BS$, which leads to the zeroth-order gradient estimator.

\begin{dfn}[zeroth-order gradient estimator] \label{dfn:zo-est}
Given a stochastic component $F(\,\cdot\,;\xi):\BR^d\to\BR$, we denote its stochastic zeroth-order gradient estimator at $x\in\BR^d$ by:
\begin{align*}  
    g(x;w,\xi) = \frac{d}{2 \delta} (F(x+ \delta w ;\xi) - F(x-\delta w;\xi))w
\end{align*}
where $w \in \BR^d$ is sampled from a uniform distribution on a unit sphere $\BS$.
\end{dfn}

We also introduce the mini-batch zeroth-order gradient estimator that plays an important role in the stochastic zeroth-order algorithms.

\begin{dfn}[Mini-batch zeroth-order gradient estimator] \label{dfn:bgrad}
Let $S=\{(\xi_i,w_i)\}_{i=1}^b$, where vectors $w_1,\dots,w_{b}\in \BR^d$ are i.i.d. sampled from a uniform distribution on unit sphere $\BS$ and random indices $\xi_1,\dots,\xi_b$ are i.i.d.
We denote the mini-batch zeroth-order gradient estimator of $f(\,\cdot\,):\BR^d\to\BR$ in terms of $S$ at $x\in\BR^d$ by
\begin{align*} 
    g(x;S) = \frac{1}{b} \sum_{i=1}^b g(x; w_i,\xi_i).
\end{align*}
\end{dfn}

Next we present some properties for zeroth-order gradient estimators. 
%The following unbiasedness and second-order moment boundedness properties are used in the convergence of the GFM.
\begin{prop}[{Lemma D.1 of \citet{lin2022gradient}}]\label{prop:zo}
Under Assumption \ref{asm:stoc} and \ref{asm:Lip}, it holds that
\begin{align*}
    \BE_{w,\xi} [g(x;w,\xi)] &= \nabla f_{\delta}(x)
\end{align*}
and
\begin{align*}    
    \mathbb{E}_{w,\xi} \Vert g(x;w,\xi) \Vert^2 &\le 16 \sqrt{2 \pi} dL^2.
\end{align*}
\end{prop}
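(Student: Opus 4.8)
The final statement to prove is Proposition~\ref{prop:zo} (Lemma D.1 of \citet{lin2022gradient}): the zeroth-order gradient estimator $g(x;w,\xi) = \frac{d}{2\delta}(F(x+\delta w;\xi) - F(x-\delta w;\xi))w$ is unbiased for $\nabla f_\delta(x)$ and has second moment bounded by $16\sqrt{2\pi}\, d L^2$.

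\bigskip

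The plan is to establish the two claims separately. For unbiasedness, I would first reduce to the deterministic component by conditioning: since $\BE_{w,\xi}[g(x;w,\xi)] = \BE_\xi\big[\BE_w[g(x;w,\xi)\mid\xi]\big]$, and $\BE_w[g(x;w,\xi)\mid\xi]$ is the classical spherical smoothing gradient estimator for the fixed function $F(\,\cdot\,;\xi)$, it suffices to recall the identity of \citet{flaxman2004online}: for any Lipschitz $h:\BR^d\to\BR$, $\frac{d}{\delta}\BE_{w\sim\mathrm{Unif}(\BS)}[h(x+\delta w)w] = \nabla h_\delta(x)$, where $h_\delta(x)=\BE_{v\sim\mathrm{Unif}(\sB)}[h(x+\delta v)]$. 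The two-point version with the antisymmetric pair $x\pm\delta w$ gives the same expectation because $\BE_w[h(x-\delta w)w] = -\BE_w[h(x+\delta w)w]$ by the symmetry $w\mapsto -w$ of the uniform measure on $\BS$; hence $\BE_w[g(x;w,\xi)\mid\xi] = \nabla F_\delta(x;\xi)$. Then I would interchange $\BE_\xi$ and the gradient (justified by the mean-squared Lipschitz bound of Proposition~\ref{prop:msLip}, which gives the needed domination) to conclude $\BE_{w,\xi}[g(x;w,\xi)] = \nabla f_\delta(x)$.

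\bigskip

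For the second-moment bound, the key observation is that the two-point difference cancels the (possibly large) function value, leaving only the Lipschitz increment: $\|g(x;w,\xi)\| = \frac{d}{2\delta}|F(x+\delta w;\xi) - F(x-\delta w;\xi)|\cdot\|w\| \le \frac{d}{2\delta}\cdot L(\xi)\cdot\|2\delta w\|\cdot 1 = d\,L(\xi)\|w\|$ using $\|w\|=1$ on the sphere. This already yields a bound of $d^2\BE_\xi[L(\xi)^2] \le d^2 L^2$, but to get the sharper $\sqrt{d}$-type scaling (the constant $16\sqrt{2\pi}\,d$ rather than $d^2$) one must not discard the direction $w$ so crudely: instead, write $g = \frac{d}{2\delta}\Delta F\cdot w$ with $|\Delta F|\le 2\delta L(\xi)\|P_w v\|$ bounded via the component of the perturbation along $w$, or more simply bound $\BE_w\|g(x;w,\xi)\|^2$ by exploiting that $F(x+\delta w;\xi)-F(x-\delta w;\xi)$ is controlled by the directional behaviour and that $\BE_w[(w^\top a)^2] = \|a\|^2/d$ for uniform $w$ on $\BS$. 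Concretely, I would bound $|F(x+\delta w;\xi)-F(x-\delta w;\xi)| \le |F(x+\delta w;\xi)-F(x;\xi)| + |F(x;\xi)-F(x-\delta w;\xi)|$ and use the known moment estimates for spherical smoothing (as in Nesterov–Spokoiny / Shamir), which produce a dimension-independent constant times $\sqrt{d}\,L(\xi)\delta$ in an $L^2(w)$ sense; squaring and multiplying by $(d/2\delta)^2$ gives $O(d)L(\xi)^2$, and taking $\BE_\xi$ and invoking $\BE_\xi[L(\xi)^2]\le L^2$ finishes the bound with the stated numeric constant $16\sqrt{2\pi}$.

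\bigskip

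The main obstacle is the second part: getting the tight $\Theta(d)$ (not $\Theta(d^2)$) dependence requires a careful $L^2$-in-$w$ argument rather than the naive worst-case Lipschitz bound, and tracking the exact numeric constant $16\sqrt{2\pi}$ forces one to use a sharp estimate on $\BE_{w\sim\mathrm{Unif}(\BS)}[\,|h(x+\delta w)-h(x)|^2\,]$ type quantities for Lipschitz $h$. The cleanest route is to cite the corresponding moment computation from \citet{lin2022gradient} or \citet{shamir2017optimal} for the single-function case and then lift to the stochastic setting by taking $\BE_\xi$ and applying Assumption~\ref{asm:Lip}; the unbiasedness part is routine once the symmetry of the uniform sphere measure and the Flaxman identity are in hand.
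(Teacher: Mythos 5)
The paper never proves this proposition itself—it is imported verbatim as Lemma D.1 of \citet{lin2022gradient}—so your eventual fallback of citing the fixed-$\xi$ two-point spherical-smoothing moment computation (\citet{shamir2017optimal}, \citet{lin2022gradient}) and then integrating over $\xi$ with $\BE_\xi[L(\xi)^2]\le L^2$ is exactly the paper's route, and your unbiasedness sketch (symmetry $w\mapsto -w$ on $\BS$ plus the \citet{flaxman2004online} identity, applied conditionally on $\xi$) is the standard argument underlying that cited lemma. One caution should you write the second-moment bound out in full: the $O(d)$ rather than $d^2$ scaling comes from centering $F(x\pm\delta w;\xi)$ at its spherical mean and invoking concentration of Lipschitz functions on the sphere (which is where the constant $16\sqrt{2\pi}$ arises), not from the triangle inequality through $F(x;\xi)$, which only reproduces the naive $d^2L^2$ bound.
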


\begin{cor} \label{cor:bzo}
Let $S=\{(\xi_i,w_i)\}_{i=1}^b$, then under Assumption \ref{asm:stoc} and \ref{asm:Lip} it holds that
\begin{align*}
     \mathbb{E}_{S} \Vert g(x;S) - \nabla f_{\delta}(x) \Vert^2 \le \frac{16 \sqrt{2 \pi} dL^2}{b} .
\end{align*}
\end{cor}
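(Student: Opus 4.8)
The plan is to reduce the statement for the mini-batch estimator $g(x;S) = \frac{1}{b}\sum_{i=1}^b g(x;w_i,\xi_i)$ to a variance computation for a sum of i.i.d. random vectors, and then invoke Proposition~\ref{prop:zo}. First I would note that by the unbiasedness part of Proposition~\ref{prop:zo}, each summand $g(x;w_i,\xi_i)$ has mean $\nabla f_\delta(x)$, so the centered vectors $Y_i \triangleq g(x;w_i,\xi_i) - \nabla f_\delta(x)$ are i.i.d. with $\BE[Y_i] = 0$. Then $g(x;S) - \nabla f_\delta(x) = \frac{1}{b}\sum_{i=1}^b Y_i$, and
\begin{align*}
    \BE_S \Vert g(x;S) - \nabla f_\delta(x) \Vert^2 = \frac{1}{b^2}\BE\Big\Vert \sum_{i=1}^b Y_i \Big\Vert^2 = \frac{1}{b^2}\sum_{i=1}^b \sum_{j=1}^b \BE\inner{Y_i}{Y_j}.
\end{align*}

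The key step is that the cross terms vanish: for $i \neq j$, independence of $(w_i,\xi_i)$ and $(w_j,\xi_j)$ together with $\BE[Y_i] = \BE[Y_j] = 0$ gives $\BE\inner{Y_i}{Y_j} = \inner{\BE[Y_i]}{\BE[Y_j]} = 0$. Hence only the diagonal terms survive, yielding $\BE_S \Vert g(x;S) - \nabla f_\delta(x) \Vert^2 = \frac{1}{b^2}\sum_{i=1}^b \BE\Vert Y_i \Vert^2 = \frac{1}{b}\BE\Vert Y_1\Vert^2$.

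It remains to bound $\BE\Vert Y_1 \Vert^2 = \BE_{w,\xi}\Vert g(x;w,\xi) - \nabla f_\delta(x)\Vert^2$. Since $\nabla f_\delta(x) = \BE_{w,\xi}[g(x;w,\xi)]$, this is a variance, so $\BE\Vert Y_1\Vert^2 = \BE_{w,\xi}\Vert g(x;w,\xi)\Vert^2 - \Vert \nabla f_\delta(x)\Vert^2 \le \BE_{w,\xi}\Vert g(x;w,\xi)\Vert^2 \le 16\sqrt{2\pi}\,dL^2$ by the second moment bound in Proposition~\ref{prop:zo}. Combining with the previous display gives $\BE_S \Vert g(x;S) - \nabla f_\delta(x)\Vert^2 \le 16\sqrt{2\pi}\,dL^2/b$, as claimed.

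There is no real obstacle here; this is a routine i.i.d.\ variance-averaging argument, and the only thing to be careful about is making the independence structure explicit (the pairs $(\xi_i,w_i)$ are jointly i.i.d., so the decoupling of cross terms is valid) and using the "variance $\le$ second moment" bound to match the form of Proposition~\ref{prop:zo}.
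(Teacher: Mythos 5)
Your proof is correct and is exactly the argument the paper intends: the corollary is stated without an explicit proof as an immediate consequence of Proposition~\ref{prop:zo}, via the standard i.i.d.\ variance-averaging computation (cross terms vanish by independence and unbiasedness, and the single-sample variance is bounded by the second moment $16\sqrt{2\pi}\,dL^2$). Nothing is missing.
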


\noindent The following continuity condition of gradient estimator is an essential element for variance reduction.
\begin{prop} \label{prop:avgL}
Under Assumption \ref{asm:stoc} and \ref{asm:Lip}, for any $w \in \BS$ and $x,y \in \BR^d$, it holds that
\begin{align*}
    \BE_{\xi }\Vert g(x;w,\xi) - g(y;w,\xi) \Vert^2 \le \frac{d^2 L^2}{\delta^2}  \Vert x - y \Vert^2.
\end{align*}
\end{prop}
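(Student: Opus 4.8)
The plan is to reduce the claim to the mean-squared Lipschitz property of Proposition~\ref{prop:msLip} by expanding the definition of the zeroth-order gradient estimator and regrouping its four function evaluations. First I would write out, directly from Definition~\ref{dfn:zo-est},
\begin{align*}
g(x;w,\xi) - g(y;w,\xi) = \frac{d}{2\delta}\Big( \big(F(x+\delta w;\xi) - F(y+\delta w;\xi)\big) - \big(F(x-\delta w;\xi) - F(y-\delta w;\xi)\big) \Big) w,
\end{align*}
where the essential point is the pairing: the $+\delta w$ evaluation at $x$ is grouped with the $+\delta w$ evaluation at $y$, and likewise for the $-\delta w$ evaluations.

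Next I would take squared Euclidean norms, use $\Vert w \Vert = 1$ since $w \in \BS$, and apply $(a-b)^2 \le 2a^2 + 2b^2$ to split into two symmetric terms, obtaining
\begin{align*}
\Vert g(x;w,\xi) - g(y;w,\xi) \Vert^2 \le \frac{d^2}{2\delta^2}\Big( \big(F(x+\delta w;\xi) - F(y+\delta w;\xi)\big)^2 + \big(F(x-\delta w;\xi) - F(y-\delta w;\xi)\big)^2\Big).
\end{align*}
Taking $\BE_\xi$ of both sides and invoking Proposition~\ref{prop:msLip} on each term, using $\Vert (x\pm\delta w) - (y\pm\delta w)\Vert = \Vert x - y\Vert$, bounds each of the two resulting expectations by $L^2\Vert x-y\Vert^2$; the factor $1/2$ is then exactly cancelled, yielding $\BE_\xi\Vert g(x;w,\xi) - g(y;w,\xi)\Vert^2 \le d^2 L^2 \delta^{-2}\Vert x-y\Vert^2$, which is the claim.

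I do not expect a genuine obstacle here; the single subtlety worth flagging is the regrouping in the first display. Writing the difference of the estimators as a difference of two $x$-versus-$y$ increments (rather than leaving it as a difference of two $\pm\delta w$ increments) is precisely what lets the mean-squared Lipschitz bound contribute $\Vert x - y\Vert^2$ together with the correct $\delta^{-2}$ prefactor; the symmetry of the two branches then absorbs the constant $2$ from $(a-b)^2 \le 2a^2 + 2b^2$, so the final constant comes out as exactly $1$.
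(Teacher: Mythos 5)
Your proposal is correct and follows essentially the same route as the paper's proof: regroup the four evaluations into the two $x$-versus-$y$ increments at $\pm\delta w$, apply Young's inequality $(a-b)^2 \le 2a^2+2b^2$ with $\Vert w\Vert = 1$, and bound each term via the mean-squared Lipschitz property of Proposition~\ref{prop:msLip}, with the constants combining to exactly $d^2L^2/\delta^2$. No gaps.
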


To simplify the presentation, we introduce the following notations:
\begin{align} \label{dfn:const}
\begin{split}
& L_{\delta} = \frac{c\sqrt{d}L}{\delta},~~ \sigma_{\delta}^2 = 16 \sqrt{2 \pi} dL^2,~~\Delta=f(x_0)-f^* \\
& M_{\delta} = \frac{d L}{\delta},~~~~f_{\delta}^* := \inf_{x \in \BR^d} f_{\delta}(x),~~\Delta_{\delta} = \Delta + L \delta,
\end{split}
\end{align}
where $x_0$ is the initial point of the algorithm.

Then the above results can be written as
\begin{itemize}
    \item $\Vert \nabla f_{\delta}(x) - \nabla f_{\delta}(y) \Vert \le L_{\delta} \Vert x - y \Vert$ for any $x,y \in \BR^d$, where $f_{\delta}$ follows Definition \ref{dfn:f-delta};
    \item $\BE_{\xi }\Vert g(x;w,\xi) - g(y;w,\xi) \Vert^2 \le M_{\delta}^2 \Vert x - y \Vert^2$ for any~$x,y\in\BR^d$, where $w$ and $\xi$ follow Definition~\ref{dfn:zo-est};    
    \item $\BE_{S} \Vert g(x;S) - \nabla f_{\delta}(x) \Vert^2 \le {\sigma_{\delta}^2}/{b}$ for any~$x \in \BR^d$, where $S$ and $b$ follow Definition \ref{dfn:bgrad};
    \item $f_{\delta}(x_0) - f_{\delta}^*  \le \Delta_{\delta}$.
\end{itemize}

In Appendix \ref{apx:tight}, we also prove the orders of Lipschitz constants $L_{\delta} = \fO(\sqrt{d} L \delta^{-1})$ and~$M_{\delta} = \fO(d L \delta^{-1})$ are tight in general.
However, it remains unknown whether the order of $\sigma_{\delta}$ could be improved.

\section{Algorithms and Main Results} 

This section introduces GFM$^+$ for nonsmooth nonconvex stochastic optimization problem~(\ref{prob:main}). We also provide complexity analysis to show the algorithm has better theoretical guarantee than GFM~\cite{lin2022gradient}. 
%The algorithm could find a $(\epsilon,\delta)$-Goldstein stationary point with~$\mathcal{O}({d^{3/2} L^3}{\epsilon^{-3}}+d^{3/2} L^2 \Delta \delta^{-1} \epsilon^{-3})$ stochastic zeroth-order oracle complexity. 
%For convex objective, we extend GFM$^+$ by introducing the warm-start step and establish the complexity of $\mathcal{O}\left({d^{3/2} L^3}{\epsilon^{-3}} + {d^{4/3} L^2}{\delta^{-2/3} \epsilon^{-2}}\right)$. 

%The details of our proposed method GFM$^+$, are presented in Algorithm \ref{alg:GFM+}. It is a direct extension of GFM proposed by \citep{lin2022gradient} since GFM$^+$ reduces to GFM (Algorithm \ref{alg:GFM}) by setting $m= \vert S' \vert =1$.

\subsection{The Algorithms}
\begin{algorithm}[t]  
\caption{GFM ($ x_0,\eta ,T $)} \label{alg:GFM}
\begin{algorithmic}[1] 
%\STATE \textbf{Input:} initial parameter $x_0\in\BR^d$, stepsize $\eta>0$, numbers of iterations $T$.
\STATE \textbf{for} $t = 0,1,\cdots,T-1$  \\[0.1cm]
\STATE \quad sample a random direction $w_t \in \BS$ and a random index $\xi_t$ \\[0.1cm]
\STATE \quad update $x_{t+1} = x_t - \eta g(x_t; w_t,\xi_t)$ \\[0.1cm]
\STATE \textbf{end for} \\[0.1cm]
\STATE \textbf{return} $x_{\rm out}$  chosen uniformly from $\{ x_t\}_{t=0}^{T-1}$ 
\end{algorithmic}
\end{algorithm}

\begin{algorithm}[t] 
\caption{GFM${}^+$($ x_0,\eta ,T, m , b,b'$)} \label{alg:GFM+}
\begin{algorithmic}[1] 
%\STATE \textbf{Input:} initial parameter $x_0\in\BR^d$, stepsize $\eta>0$,  epoch length  $m$, batch sizes $\vert S \vert $ and $ \vert S' \vert$, numbers of iterations $T$.
\STATE $v_0 = g(x_0;S')$ \\[0.1cm]
\STATE \textbf{for} $t = 0,1,\cdots,T-1$  \\[0.1cm]
\STATE \quad \textbf{if} $t~{\rm mod}~m =0$ \\[0.1cm]
\STATE \quad \quad sample $S' = \{(\xi_i,w_i) \}_{i=1}^{b'}$ \\[0.1cm]
\STATE \quad \quad calculate $v_t = g(x_t ;S') $ \\[0.1cm]
\STATE \quad \textbf{else} \\[0.1cm]
\STATE \quad \quad sample $S = \{(\xi_i,w_i) \}_{i=1}^b$   \\[0.1cm]
\STATE \quad \quad calculate $v_t = v_{t-1} + g(x_t;S) - g(x_{t-1};S)$ \\[0.1cm]
\STATE \quad \textbf{end if} \\[0.1cm]
\STATE \quad update $x_{t+1} = x_t - \eta v_t$ \\[0.1cm]
\STATE \textbf{end for} \\[0.1cm]
\STATE \textbf{return} $x_{\rm out}$  chosen uniformly from $\{ x_t\}_{t=0}^{T-1}$ 
\end{algorithmic}
\end{algorithm}

We propose GFM$^+$ in Algorithm~\ref{alg:GFM+}.
Different from GFM (Algorithm~\ref{alg:GFM})~\cite{lin2022gradient} that uses a vanilla zeroth-order gradient estimator~$g(x_t;w_t,\xi_t)$, GFM$^+$ approximates $\nabla f_\delta (x_t)$ by recursive gradient estimator $v_t$ with update rule
\begin{align*}
    v_{t} = v_{t-1} + g(x_t;S) - g(x_{t-1};S).
\end{align*}
The estimator $v_t$ properly reduces the variance in estimating~$\nabla f_\delta(x)$, which leads to a better stochastic zeroth-order oracle upper complexity than GFM. 

The variance reduction is widely used to design stochastic first-order and zeroth-order algorithms for smooth nonconvex optimization~\citep{fang2018spider,pham2020proxsarah,wang2019spiderboost,nguyen2017sarah,ji2019improved,huang2022accelerated,liu2018zeroth,levy2021storm+,cutkosky2019momentum}.
The existing variance-reduced algorithms for nonsmooth problem~\cite{pham2020proxsarah,j2016proximal,xiao2014proximal,li2022simple} require the objective function to have a composite structure, which does not include our setting that each stochastic component $F(x;\xi)$ could be nonsmooth.

% Without the help of variance reduction, GFM finds a $(\delta,\epsilon)$-Goldstein stationary point within
% \begin{align*}
% \fO(L_{\delta} \sigma_{\delta}^2 \epsilon^{-4} ) = \fO(d^{3/2} L^3  \delta^{-1} \epsilon^{-4} )    
% \end{align*}
% times of zeroth-order oracle calls, as shown in Theorem 3.2 in \citep{lin2022gradient}. With the help of variance reduction, we can improve this upper bound of complexity into
% \begin{align*}
%     \fO (  M_{\delta} \sigma_{\delta} \epsilon^{-3}) = \fO(d^{3/2} L^2 \delta^{-1} \epsilon^{-3}),
% \end{align*}
% which has a better dependency on both $\epsilon$ and Lipschitz constant $L$. See Theorem \ref{thm:GFM+}.

\subsection{Complexity Analysis} \label{sec:GFM+}

This subsection considers the upper bound complexity of proposed GFM$^+$.
%In this section, we show that the proposed method GFM$^+$ can find a stochastic $(\delta,\epsilon)$-Goldstein stationary point within a complexity of $\fO(d^{3/2} \delta^{-1} \epsilon^{-3})$.
First of all, we present the descent lemma.
\begin{lem}[{\citet[Lemma 2]{li2021page}}] \label{lem:delem}
Under Assumption~\ref{asm:stoc} and~\ref{asm:Lip} , Algorithm \ref{alg:GFM+} holds that
\begin{align*}
f_{\delta}(x_{t+1}) &\le f_{\delta}(x_t) -  \frac{\eta}{2} \Vert \nabla f_{\delta}(x_t) \Vert^2 \\
&\quad +\frac{\eta}{2} \Vert v_t - \nabla f_{\delta}(x_t) \Vert^2-\left( \frac{\eta}{2} - \frac{L_{\delta} \eta^2}{2} \right) \Vert v_t \Vert^2, 
\end{align*}
where $L_{\delta}$ follows the definition in (\ref{dfn:const}).
\end{lem}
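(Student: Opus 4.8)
The statement is the standard descent inequality for a gradient-type step driven by an inexact gradient $v_t$, specialized to the smoothed surrogate $f_\delta$. The plan is to combine the Lipschitz-gradient property of $f_\delta$ with the polarization identity. First I would invoke the third bullet of Proposition~\ref{prop:smooth} (equivalently the first item in the list following~(\ref{dfn:const})), which guarantees that $f_\delta$ is differentiable with $L_\delta$-Lipschitz gradient. This yields the quadratic upper bound
\begin{align*}
  f_{\delta}(x_{t+1}) \le f_{\delta}(x_t) + \inner{\nabla f_{\delta}(x_t)}{x_{t+1}-x_t} + \frac{L_{\delta}}{2}\Vert x_{t+1}-x_t\Vert^2 .
\end{align*}

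Next I would substitute the update rule of Algorithm~\ref{alg:GFM+}, namely $x_{t+1}-x_t = -\eta v_t$, to obtain
\begin{align*}
  f_{\delta}(x_{t+1}) \le f_{\delta}(x_t) - \eta\,\inner{\nabla f_{\delta}(x_t)}{v_t} + \frac{L_{\delta}\eta^2}{2}\Vert v_t\Vert^2 .
\end{align*}
Then I would rewrite the inner-product term using the polarization identity $\inner{a}{b} = \tfrac12\Vert a\Vert^2 + \tfrac12\Vert b\Vert^2 - \tfrac12\Vert a-b\Vert^2$ with $a=\nabla f_{\delta}(x_t)$ and $b=v_t$, giving
\begin{align*}
  -\eta\,\inner{\nabla f_{\delta}(x_t)}{v_t}
  = -\frac{\eta}{2}\Vert \nabla f_{\delta}(x_t)\Vert^2 - \frac{\eta}{2}\Vert v_t\Vert^2 + \frac{\eta}{2}\Vert v_t-\nabla f_{\delta}(x_t)\Vert^2 .
\end{align*}

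Finally I would plug this expansion into the previous display and collect the two $\Vert v_t\Vert^2$ terms, $-\tfrac{\eta}{2}\Vert v_t\Vert^2 + \tfrac{L_{\delta}\eta^2}{2}\Vert v_t\Vert^2 = -\bigl(\tfrac{\eta}{2}-\tfrac{L_{\delta}\eta^2}{2}\bigr)\Vert v_t\Vert^2$, which is exactly the claimed inequality. This argument is purely algebraic once smoothness is in hand, so there is no genuine obstacle; the only point requiring care is that the descent inequality is applied to $f_\delta$ (not to $f$ itself), which is legitimate precisely because Proposition~\ref{prop:smooth} supplies the Lipschitz-gradient bound for $f_\delta$. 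Note also that the step holds deterministically for every realization of the samples used to form $v_t$, so no expectation is needed at this stage.
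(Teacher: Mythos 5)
Your proof is correct and follows essentially the same route as the paper: the $L_\delta$-smoothness quadratic bound from Proposition~\ref{prop:smooth}, substitution of the update $x_{t+1}=x_t-\eta v_t$, and the identity $2a^\top b=\Vert a\Vert^2+\Vert b\Vert^2-\Vert a-b\Vert^2$ to split the cross term. No gaps; the remark that the inequality holds deterministically per realization is a nice clarification consistent with the paper's usage.
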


%\begin{remark}Compared with (B.6) in SPIDER \citep{fang2018spider} and (13) in SpiderBoost \citep{wang2019spiderboost}, the above descent lemma has an additional term $\eta \Vert \nabla f_{\delta} (x_t) \Vert^2/2 $ and this negative term makes the analysis more concise. \end{remark}

Secondly, we show the variance bound of stochastic recursive gradient estimators for smooth surrogate $f_\delta (x)$, which is similar to Lemma 1 of \citet{fang2018spider}.
\begin{lem}[Variance bound] \label{lem:vrb}
Under Assumption \ref{asm:stoc}, \ref{asm:Lip} , for Algorithm \ref{alg:GFM+} it holds that
\begin{align*}
&\quad \mathbb{E} \Vert v_{t+1} - \nabla f_{\delta}(x_{t+1}) \Vert^2 \\
&\le \mathbb{E} \left[ \Vert v_{t} - \nabla f_{\delta}(x_{t}) \Vert^2 + \frac{ \eta^2 M_{\delta}^2}{b} \Vert v_t \Vert^2 \right],
\end{align*}
where $M_{\delta}$ follows the definition in (\ref{dfn:const}).
\end{lem}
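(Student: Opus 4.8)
## Proof Plan for the Variance Bound (Lemma \ref{lem:vrb})

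The plan is to analyze the increment of the estimation error~$\Vert v_t - \nabla f_\delta(x_t)\Vert^2$ for one ``recursive'' step, that is, a step where $t \not\equiv 0 \pmod m$ so that $v_{t+1} = v_t + g(x_{t+1};S) - g(x_t;S)$. Define the error vector $e_t \triangleq v_t - \nabla f_\delta(x_t)$. First I would write
\begin{align*}
e_{t+1} = e_t + \big(g(x_{t+1};S) - g(x_t;S)\big) - \big(\nabla f_\delta(x_{t+1}) - \nabla f_\delta(x_t)\big),
\end{align*}
and denote the second parenthesized block by $\Delta_t \triangleq \big(g(x_{t+1};S) - g(x_t;S)\big) - \big(\nabla f_\delta(x_{t+1}) - \nabla f_\delta(x_t)\big)$. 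Expanding the square gives $\Vert e_{t+1}\Vert^2 = \Vert e_t\Vert^2 + 2\langle e_t, \Delta_t\rangle + \Vert \Delta_t\Vert^2$.

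The key observation is a conditional-expectation (martingale) argument. Condition on the filtration $\mathcal F_t$ generated by all randomness up to and including the choice of $x_{t+1}$ (but not the fresh batch $S = \{(\xi_i,w_i)\}_{i=1}^b$ drawn at step $t+1$). Then $e_t$ and $x_t, x_{t+1}$ are $\mathcal F_t$-measurable, while $S$ is independent of $\mathcal F_t$. By Proposition~\ref{prop:zo} (unbiasedness), $\BE_S[g(x_{t+1};S)] = \nabla f_\delta(x_{t+1})$ and $\BE_S[g(x_t;S)] = \nabla f_\delta(x_t)$, so $\BE_S[\Delta_t \mid \mathcal F_t] = 0$, which kills the cross term: $\BE[\langle e_t,\Delta_t\rangle \mid \mathcal F_t] = 0$. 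For the remaining term $\BE_S[\Vert\Delta_t\Vert^2 \mid \mathcal F_t]$, I would use that $\Delta_t$ is a sum of $b$ i.i.d. mean-zero terms $\frac1b\big[(g(x_{t+1};w_i,\xi_i)-g(x_t;w_i,\xi_i)) - (\nabla f_\delta(x_{t+1})-\nabla f_\delta(x_t))\big]$, so its second moment equals $\frac1b$ times the per-sample variance, which is bounded by $\frac1b\,\BE_{w,\xi}\Vert g(x_{t+1};w,\xi) - g(x_t;w,\xi)\Vert^2$ (dropping the subtracted mean only decreases the variance). Applying Proposition~\ref{prop:avgL} (equivalently the $M_\delta$ bound in~(\ref{dfn:const})) gives $\BE_S[\Vert\Delta_t\Vert^2\mid\mathcal F_t] \le \frac{M_\delta^2}{b}\Vert x_{t+1}-x_t\Vert^2$, and since $x_{t+1} - x_t = -\eta v_t$ from the update rule, this is $\frac{\eta^2 M_\delta^2}{b}\Vert v_t\Vert^2$. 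Taking total expectation yields the claim.

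The main obstacle — really the only subtle point — is being careful about what is conditioned on versus what is fresh: the whole argument rests on $S$ being drawn independently of $x_t$ and $x_{t+1}$, which holds because $S$ is sampled at the start of iteration $t+1$ after $x_{t+1}$ has already been formed from $v_t$. One also has to handle the $t\equiv 0\pmod m$ case, but there the statement is vacuous for the recursion step in the stated form, or more precisely the lemma as phrased applies to steps using the recursive update; if a full-batch restart occurred at $t+1$ then $v_{t+1}$ is reset and a separate (easier, via Corollary~\ref{cor:bzo}) bound applies — I would note that the inequality as written is used only along recursive stretches between restarts. A minor technical care is justifying the variance-of-a-sum-of-i.i.d. decomposition and the ``variance $\le$ second moment'' step; both are routine once the independence structure is pinned down.
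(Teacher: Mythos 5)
Your proof is correct and follows essentially the same route as the paper: the same error decomposition $e_{t+1}=e_t+\Delta_t$, the conditional-expectation (martingale) argument to kill the cross term via unbiasedness, the i.i.d.\ mean-zero decomposition giving the $1/b$ factor with variance bounded by the uncentered second moment, and finally Proposition~\ref{prop:avgL} together with $x_{t+1}-x_t=-\eta v_t$. Your explicit remark that the bound pertains to the recursive steps (with restart steps handled separately via Corollary~\ref{cor:bzo}) matches how the paper uses the lemma in Corollary~\ref{cor:vrb}.
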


For convenience, we denote $n_t \triangleq \lceil t / m \rceil$ as the index of epoch such that $(n_t-1)m  \le t \le n_t m -1 $. 
Then Lemma \ref{lem:vrb} leads to the following corollary. 
\begin{cor} \label{cor:vrb}
Under Assumption \ref{asm:stoc} and \ref{asm:Lip}, for Algorithm \ref{alg:GFM+} it holds that
\begin{align*}
\mathbb{E} \Vert v_{t+1} - \nabla f_{\delta} (x_{t+1}) \Vert^2 
\le \frac{\sigma_{\delta}^2}{b'} + \frac{\eta^2 M_{\delta}^2}{b} \sum_{i = m(n_t-1)}^t \!\!\BE \Vert v_i \Vert^2.
\end{align*}
\end{cor}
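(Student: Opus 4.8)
\textbf{Proof proposal for Corollary \ref{cor:vrb}.}

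The plan is to iterate the one-step variance bound of Lemma \ref{lem:vrb} across a single epoch, using the restart at the beginning of each epoch to obtain the leading constant term. First I would fix an index $t$ and recall that $n_t = \lceil t/m\rceil$ is the epoch index, so that $t$ lies in the epoch $\{m(n_t-1),\dots,mn_t-1\}$, and $t_0 \triangleq m(n_t-1)$ is the first iteration of that epoch. At iteration $t_0$ the algorithm executes the branch $t \bmod m = 0$, which sets $v_{t_0} = g(x_{t_0};S')$ with $|S'| = b'$. By Corollary \ref{cor:bzo} applied with batch size $b'$ (equivalently, the third bulleted consequence of the notation display \eqref{dfn:const}), this gives the base case
\begin{align*}
\BE\Vert v_{t_0} - \nabla f_{\delta}(x_{t_0})\Vert^2 \le \frac{\sigma_{\delta}^2}{b'}.
\end{align*}

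Next I would apply Lemma \ref{lem:vrb} repeatedly for the indices $i = t_0, t_0+1,\dots,t$ (these are the iterations within the current epoch that use the recursive update, since all of $t_0+1,\dots,t$ satisfy $i \bmod m \ne 0$). Telescoping the recursion
\begin{align*}
\BE\Vert v_{i+1} - \nabla f_{\delta}(x_{i+1})\Vert^2 \le \BE\Vert v_{i} - \nabla f_{\delta}(x_{i})\Vert^2 + \frac{\eta^2 M_{\delta}^2}{b}\,\BE\Vert v_i\Vert^2
\end{align*}
from $i = t_0$ up to $i = t$, and substituting the base case bound, yields
\begin{align*}
\BE\Vert v_{t+1} - \nabla f_{\delta}(x_{t+1})\Vert^2 \le \frac{\sigma_{\delta}^2}{b'} + \frac{\eta^2 M_{\delta}^2}{b}\sum_{i=t_0}^{t}\BE\Vert v_i\Vert^2,
\end{align*}
which is exactly the claimed inequality once we write $t_0 = m(n_t-1)$. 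One minor point to handle carefully is the very first epoch: when $n_t = 1$ we have $t_0 = 0$, and $v_0$ is initialized on line 1 as $g(x_0;S')$, so the same base case applies; also when $t+1$ is itself a multiple of $m$ the bound still holds trivially since the left side is then directly $\le \sigma_\delta^2/b' \le \sigma_\delta^2/b' + (\text{nonnegative sum})$, but in fact we only need the stated inequality for $t+1$ in the interior of an epoch, so this edge case is inconsequential.

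I do not anticipate a serious obstacle here; the argument is a routine telescoping of Lemma \ref{lem:vrb} anchored by the batch restart. The only thing requiring a little care is bookkeeping the epoch boundaries correctly — making sure that $i \bmod m \ne 0$ for all $i$ in the summation range $t_0+1,\dots,t$ so that Lemma \ref{lem:vrb}'s hypothesis (that the recursive update was used) is legitimately in force, and that the restart bound is invoked with the correct batch size $b'$ rather than $b$.
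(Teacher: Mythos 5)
Your proof is correct and is exactly the argument the paper intends (the paper leaves this corollary implicit, as a direct consequence of Lemma \ref{lem:vrb}): telescope the one-step recursion over the current epoch and anchor it at the epoch-start estimate $v_{m(n_t-1)}=g(x_{m(n_t-1)};S')$, whose variance is at most $\sigma_{\delta}^2/b'$ by Corollary \ref{cor:bzo}. Your handling of the edge cases (the first epoch and the case where $t+1$ is a multiple of $m$) is also fine, so nothing is missing.
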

Combing the descent Lemma \ref{lem:delem} and  Corollary \ref{cor:vrb}, we obtain the progress of one epoch.

\begin{lem}[One epoch progress] \label{lem:epoch}
Under Assumption \ref{asm:stoc}~and~\ref{asm:Lip}, for Algorithm \ref{alg:GFM+} it holds that
\begin{align*}
&\quad \mathbb{E}[ f_{\delta}(x_{m n_t})] \\ &\le  f_{\delta}(x_{ m(n_t-1)})  -  \frac{\eta}{2} \sum_{i=m(n_t-1)}^{m n_t-1} \BE \Vert \nabla f_{\delta}(x_i) \Vert^2 +  \frac{ m \eta  \sigma_{\delta}^2}{2  b' } \\
&\quad - \left( \frac{\eta}{2} - \frac{L_{\delta} \eta^2}{2} - \frac{m\eta^3 M_\delta^2}{2 b} \right) \sum_{i=m(n_t-1)}^{mn_t-1} \BE \Vert v_i \Vert^2,
\end{align*}
where $L_{\delta}, M_{\delta}, \sigma_{\delta}$ follow the definition in (\ref{dfn:const}).
\end{lem}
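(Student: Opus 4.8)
The plan is to derive Lemma~\ref{lem:epoch} by summing the descent Lemma~\ref{lem:delem} over a single epoch and then controlling the variance terms $\sum_i \BE\Vert v_i - \nabla f_\delta(x_i)\Vert^2$ via Corollary~\ref{cor:vrb}. Fix an epoch $n = n_t$, so the iterates range over $i = m(n-1), \dots, mn - 1$, with the epoch starting at $x_{m(n-1)}$ (where $v$ is refreshed as a mini-batch estimator with batch $b'$). First I would sum the inequality of Lemma~\ref{lem:delem} over $i$ in this range; the left-hand side telescopes to give $f_\delta(x_{mn}) - f_\delta(x_{m(n-1)})$ on the right, producing
\begin{align*}
\BE[f_\delta(x_{mn})] \le f_\delta(x_{m(n-1)}) - \frac{\eta}{2}\sum_i \BE\Vert \nabla f_\delta(x_i)\Vert^2 + \frac{\eta}{2}\sum_i \BE\Vert v_i - \nabla f_\delta(x_i)\Vert^2 - \Big(\frac{\eta}{2} - \frac{L_\delta \eta^2}{2}\Big)\sum_i \BE\Vert v_i\Vert^2.
\end{align*}
So the task reduces to bounding $\sum_{i=m(n-1)}^{mn-1} \BE\Vert v_i - \nabla f_\delta(x_i)\Vert^2$ by $\tfrac{m\sigma_\delta^2}{b'} + \tfrac{\eta^2 M_\delta^2}{b}\sum_i \BE\Vert v_i\Vert^2$, which would then be absorbed into the displayed terms to yield exactly the form claimed (the $\tfrac{m\eta^3 M_\delta^2}{2b}$ correction to the $\Vert v_i\Vert^2$ coefficient comes from the $\tfrac{\eta}{2}\cdot\tfrac{\eta^2 M_\delta^2}{b}$ product).

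Next I would handle the variance sum. Corollary~\ref{cor:vrb} gives, for each $i$ in the epoch, $\BE\Vert v_i - \nabla f_\delta(x_i)\Vert^2 \le \tfrac{\sigma_\delta^2}{b'} + \tfrac{\eta^2 M_\delta^2}{b}\sum_{j=m(n-1)}^{i-1}\BE\Vert v_j\Vert^2$ (using that $n_{i-1} = n$ throughout the epoch except possibly boundary indexing, which I'd check carefully). Summing over the $m$ values of $i$ in the epoch gives $\sum_i \BE\Vert v_i - \nabla f_\delta(x_i)\Vert^2 \le \tfrac{m\sigma_\delta^2}{b'} + \tfrac{\eta^2 M_\delta^2}{b}\sum_i \sum_{j < i} \BE\Vert v_j\Vert^2 \le \tfrac{m\sigma_\delta^2}{b'} + \tfrac{m\eta^2 M_\delta^2}{b}\sum_i \BE\Vert v_i\Vert^2$, where the last step bounds the double sum $\sum_{i}\sum_{j=m(n-1)}^{i-1}$ crudely by $m$ copies of $\sum_j$. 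Substituting this into the summed descent inequality and collecting the $\Vert v_i\Vert^2$ coefficients yields the stated lemma.

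The step I expect to be the main obstacle — though it is more bookkeeping than genuine difficulty — is getting the epoch index ranges and the conditioning consistent, in particular making sure Corollary~\ref{cor:vrb} is applied with the right value of $n_t$ at each index $i$ in the epoch and that the refresh step at $i = m(n-1)$ (batch $b'$) is what seeds the $\sigma_\delta^2/b'$ term rather than the recursive update. One has to be careful that the index $n_t$ in the corollary, defined via $\lceil t/m\rceil$, lines up so that $m(n_i - 1) = m(n-1)$ for all $i$ in the epoch; a slight reindexing (e.g. writing the epoch as $i = km, \dots, (k+1)m - 1$) may make this cleaner. The factor-counting to confirm the coefficient $\tfrac{\eta}{2} - \tfrac{L_\delta\eta^2}{2} - \tfrac{m\eta^3 M_\delta^2}{2b}$ is then routine, and the tower property handles turning the conditional bounds in Lemmas~\ref{lem:delem} and~\ref{lem:vrb} into the unconditional expectations appearing in the statement.
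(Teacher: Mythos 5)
Your proposal is correct and follows essentially the same route as the paper, whose proof of this lemma is exactly "plug Corollary~\ref{cor:vrb} into Lemma~\ref{lem:delem}": you sum the descent inequality over the epoch, bound the accumulated variance sum by $\tfrac{m\sigma_\delta^2}{b'} + \tfrac{m\eta^2 M_\delta^2}{b}\sum_i \BE\Vert v_i\Vert^2$ via the crude double-sum estimate, and collect coefficients, which is precisely the intended argument. Your attention to the epoch-boundary indexing and to the refresh step seeding the $\sigma_\delta^2/b'$ term is the only bookkeeping the paper leaves implicit, and you handle it correctly.
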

Connecting the progress for all of $T$ epochs leads to the following result.
\begin{cor} \label{cor:epoch}
Under Assumption \ref{asm:stoc} and \ref{asm:Lip} , for Algorithm~\ref{alg:GFM+} it holds that
\begin{align}\label{ieq:star}
\small\begin{split}
 \mathbb{E}[ f_{\delta}(x_{T})] &\le  f_{\delta}(x_0)  -  \frac{\eta}{2} \sum_{i=0}^{T-1} \BE \Vert \nabla f_{\delta}(x_i) \Vert^2 +  \frac{ T \eta  \sigma_{\delta}^2}{2  b' }\\
&\quad - \underbrace{\left( \frac{\eta}{2} - \frac{L_{\delta} \eta^2}{2} - \frac{m\eta^3 M_\delta^2}{2 b } \right)}_{(*)} \sum_{i=0}^{T-1} \BE \Vert v_i \Vert^2.
\end{split}
\end{align}
\end{cor}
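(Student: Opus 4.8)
The plan is to obtain (\ref{ieq:star}) from the one-epoch progress bound of Lemma~\ref{lem:epoch} by a telescoping argument over the epochs. Write $N$ for the number of epochs, so that $T = Nm$, and recall that epoch $n$ (for $n=1,\dots,N$) consists of the iterations with indices $m(n-1), m(n-1)+1,\dots, mn-1$, i.e.\ exactly those $t$ with $n_t = n$. I would apply Lemma~\ref{lem:epoch} once for each $n_t = n \in\{1,\dots,N\}$, take total expectation over all the randomness of Algorithm~\ref{alg:GFM+} (using the tower property, so that the term $f_{\delta}(x_{m(n-1)})$ on the right-hand side is replaced by $\mathbb{E}[f_{\delta}(x_{m(n-1)})]$), and add the resulting $N$ inequalities.

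After summing, three simplifications occur. First, the function-value terms telescope:
\[
\sum_{n=1}^{N}\Big(\mathbb{E}[f_{\delta}(x_{mn})] - \mathbb{E}[f_{\delta}(x_{m(n-1)})]\Big) = \mathbb{E}[f_{\delta}(x_{mN})] - f_{\delta}(x_0) = \mathbb{E}[f_{\delta}(x_{T})] - f_{\delta}(x_0),
\]
where we used that $x_0$ is deterministic and $mN = T$. Second, the two nested index sums concatenate into single sums over $\{0,1,\dots,T-1\}$: since $\bigcup_{n=1}^{N}\{m(n-1),\dots,mn-1\} = \{0,\dots,T-1\}$, we get $\sum_{n=1}^{N}\sum_{i=m(n-1)}^{mn-1}\mathbb{E}\Vert\nabla f_{\delta}(x_i)\Vert^2 = \sum_{i=0}^{T-1}\mathbb{E}\Vert\nabla f_{\delta}(x_i)\Vert^2$ and likewise $\sum_{n=1}^{N}\sum_{i=m(n-1)}^{mn-1}\mathbb{E}\Vert v_i\Vert^2 = \sum_{i=0}^{T-1}\mathbb{E}\Vert v_i\Vert^2$; the coefficient $(*) = \eta/2 - L_{\delta}\eta^2/2 - m\eta^3 M_{\delta}^2/(2b)$ is epoch-independent, so it factors out of the combined $v_i$-sum. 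Third, the per-epoch noise term adds up to $N\cdot m\eta\sigma_{\delta}^2/(2b') = T\eta\sigma_{\delta}^2/(2b')$. Rearranging the single inequality that remains to isolate $\mathbb{E}[f_{\delta}(x_T)]$ on the left yields precisely (\ref{ieq:star}).

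There is no substantive obstacle here; the only points requiring care are the expectation bookkeeping — Lemma~\ref{lem:epoch} must be read with full (equivalently, conditional) expectations for the chaining across epochs to be valid — and, strictly speaking, the case where $m$ does not divide $T$. In that case the last epoch is partial, covering iterations $m(N-1),\dots,T-1$ with $N = \lceil T/m\rceil$; Lemma~\ref{lem:epoch} still applies to it with the inner sums truncated at $T-1$, the telescoping identity still produces $\mathbb{E}[f_{\delta}(x_T)] - f_{\delta}(x_0)$, and the total noise contribution over all epochs is at most $T\eta\sigma_{\delta}^2/(2b')$ since the epoch lengths sum to $T$. Hence the stated bound holds as written.
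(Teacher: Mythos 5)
Your proposal is correct and follows essentially the same route as the paper: the paper obtains Corollary~\ref{cor:epoch} by applying the one-epoch progress bound of Lemma~\ref{lem:epoch} (itself just Corollary~\ref{cor:vrb} plugged into Lemma~\ref{lem:delem}) to every epoch and telescoping, exactly as you do. Your added care about full expectations and a possibly partial final epoch only makes explicit bookkeeping the paper leaves implicit.
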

Using Corollary \ref{cor:epoch} with 
\begin{align*}
    \eta = \frac{\sqrt{b'}}{m M_{\delta}},\quad m = \left\lceil\frac{L_{\delta} \sqrt{b'}}{M_{\delta}}\right\rceil \quad \text{and} \quad b = \left \lceil \frac{2b'}{m}\right \rceil,
\end{align*}
we know that the term~(*) in inequality (\ref{ieq:star}) is positive and obtain
%With corollary \ref{cor:epoch} in hand, setting $\eta,m,b$ according to (\ref{dfn:para-1}) and (\ref{dfn:para-2}) guarantees  term (*) to be positive and Corollary \ref{cor:vrb} becomes
\begin{align} \label{eq:main}
\small\begin{split}
\mathbb{E}[ f_{\delta}(x_{T})] \le  f_{\delta}(x_0)  -  \frac{\eta}{2} \sum_{i=0}^{T-1} \BE \Vert \nabla f_{\delta}(x_i) \Vert^2 +  \frac{ T \eta  \sigma_{\delta}^2}{2 b'},
\end{split}
\end{align}
which means 
\begin{align} \label{eq:station}
    \BE \Vert \nabla f_{\delta} (x_{\rm out} ) \Vert^2 \le \frac{2 \Delta{\delta}}{\eta T} + \frac{\sigma_{\delta}^2}{b'}. 
\end{align}
%$x_t$ converges to a stationary point of $f_{\delta}(\,\cdot\,)$ 
Applying inequality (\ref{eq:station}) with
\begin{align} 
~T= \left \lceil \frac{4 \Delta_{\delta}}{\eta \epsilon^2} \right \rceil \quad\text{and}\quad b' = \left \lceil  \frac{2 \sigma_{\delta}^2}{\epsilon^2} \right \rceil,
\end{align}
we conclude that the output $x_{\rm out}$ is an $\epsilon$-stationary point of the smooth surrogate $f_{\delta}(\,\cdot\,)$ in expectation.

%Set $b'$ and $T$ according to (\ref{dfn:para-1}),  then we can use the Jensen's inequality that  $\BE \Vert \nabla f_{\delta}(x_{\rm out}) \Vert \le \sqrt{ \BE \Vert \nabla f_{\delta} (x_{\rm out} ) \Vert^2 }$ together with the relationship that $\nabla f_{\delta}(x_{\rm out}) \in \partial_{\delta} f(x_{\rm out})$ according to Proposition \ref{prop:smooth} to conclude the output is a  $(\delta,\epsilon)$-Goldstein stationary point in expectation

Finally, the relationship $\nabla f_{\delta}(x_{\rm out}) \in \partial_{\delta} f(x_{\rm out})$  shown in Proposition \ref{prop:smooth} indicates that Algorithm~\ref{alg:GFM+} with above parameter settings could output a $(\delta,\epsilon)$-Goldstein stationary point of~$f(\,\cdot\,)$ in expectation, and the total stochastic zeroth-order oracle complexity is 
\begin{align*} 
T \left( \left \lceil\frac{b'}{m} \right \rceil+ 2b \right) = \mathcal{O} \left( d^{3/2} \left(  \frac{L^3}{\epsilon^3}+ \frac{\Delta L^2}{\delta \epsilon^3} \right)   \right).
%= \fO \left ( \frac{\Delta_{\delta} b'}{m \eta \epsilon^2}   \right)  = (\ref{eq:final-cmp})
\end{align*}

We formally summarize the result of our analysis as follows.
\begin{thm}[GFM$^+$] \label{thm:GFM+}
Under Assumption \ref{asm:stoc}, \ref{asm:Lip} and \ref{asm:lower}, 
we run GFM${}^+$ (Algorithm~\ref{alg:GFM+}) with
\begin{align*}%\label{dfn:para-1}
&T= \left \lceil \frac{4 \Delta_{\delta}}{\eta \epsilon^2} \right \rceil, ~b' = \left \lceil  \frac{2 \sigma_{\delta}^2}{\epsilon^2} \right \rceil,~b = \left \lceil \frac{2b'}{m}\right \rceil, \\
%\end{align}
%where the stepsize and epoch length are given by
%\begin{align} \label{dfn:para-2}
&\eta = \frac{\sqrt{b'}}{m M_{\delta}} \qquad  \text{and} \qquad  m = \left\lceil\frac{L_{\delta} \sqrt{b'}}{M_{\delta}}\right\rceil,
\end{align*}
where $L_{\delta},\sigma_{\delta},M_{\delta}$ and $\Delta_{\delta}$ follows the definition in (\ref{dfn:const}).
Then it outputs a $(\delta,\epsilon)$-Goldstein stationary point of $f(\cdot)$ in expectation and the total stochastic zeroth-order oracle complexity is at most
\begin{align} \label{eq:final-cmp}
\mathcal{O} \left( d^{3/2} \left( \frac{L^3}{\epsilon^3}+ \frac{\Delta L^2}{\delta \epsilon^3}  \right)   \right),
\end{align}
where $\Delta=f(x_0)-f^*$.
\end{thm}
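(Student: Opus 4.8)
The plan is to assemble the already-stated ingredients in the natural order: descent, variance control, per-epoch aggregation, parameter selection, and a final conversion from smoothed stationarity to Goldstein stationarity. Concretely, I would first invoke the descent Lemma~\ref{lem:delem} and the variance bound Lemma~\ref{lem:vrb}; the key tension is that the descent inequality introduces the term $\tfrac{\eta}{2}\Vert v_t-\nabla f_\delta(x_t)\Vert^2$ (bad) and a negative term $-(\tfrac{\eta}{2}-\tfrac{L_\delta\eta^2}{2})\Vert v_t\Vert^2$ (good), while the recursive estimator's variance grows like $\tfrac{\eta^2 M_\delta^2}{b}\Vert v_t\Vert^2$ per step. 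Summing the variance bound within one epoch (as in Corollary~\ref{cor:vrb}, using that at the start of each epoch $v_{mn}$ is a fresh mini-batch estimate with variance $\le\sigma_\delta^2/b'$) and plugging into the telescoped descent inequality over that epoch yields the one-epoch progress Lemma~\ref{lem:epoch}, in which the coefficient of $\sum\Vert v_i\Vert^2$ is exactly $(\ast)=\tfrac{\eta}{2}-\tfrac{L_\delta\eta^2}{2}-\tfrac{m\eta^3 M_\delta^2}{2b}$.

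Next I would telescope Lemma~\ref{lem:epoch} over all $T$ epochs to get Corollary~\ref{cor:epoch}, namely inequality~(\ref{ieq:star}). The crucial algebraic step is choosing the parameters so that $(\ast)\ge 0$: with $\eta=\sqrt{b'}/(mM_\delta)$ one has $L_\delta\eta^2/2$ and $m\eta^3M_\delta^2/(2b)$ each at most $\eta/4$ provided $m\ge L_\delta\sqrt{b'}/M_\delta$ and $b\ge 2b'/m$ (this is precisely why the stated choices of $m$ and $b$ appear), so the negative $\Vert v_i\Vert^2$ term can be dropped, giving inequality~(\ref{eq:main}). Rearranging and using $f_\delta(x_T)\ge f_\delta^*$ together with $f_\delta(x_0)-f_\delta^*\le\Delta_\delta$ (from Proposition~\ref{prop:smooth}) and the fact that $x_{\rm out}$ is uniform over $\{x_i\}_{i=0}^{T-1}$, I obtain the stationarity bound~(\ref{eq:station}): $\BE\Vert\nabla f_\delta(x_{\rm out})\Vert^2\le \tfrac{2\Delta_\delta}{\eta T}+\tfrac{\sigma_\delta^2}{b'}$. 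Choosing $b'=\lceil 2\sigma_\delta^2/\epsilon^2\rceil$ kills the second term down to $\epsilon^2/2$, and $T=\lceil 4\Delta_\delta/(\eta\epsilon^2)\rceil$ kills the first, so $\BE\Vert\nabla f_\delta(x_{\rm out})\Vert^2\le\epsilon^2$. Finally, Proposition~\ref{prop:smooth} gives $\nabla f_\delta(x_{\rm out})\in\partial_\delta f(x_{\rm out})$, hence $x_{\rm out}$ is a $(\delta,\epsilon)$-Goldstein stationary point of $f$ in expectation (using Jensen to pass from $\BE\Vert\cdot\Vert^2\le\epsilon^2$ to $\BE{\rm dist}(0,\partial_\delta f(x_{\rm out}))\le\epsilon$).

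The remaining work is the oracle-complexity bookkeeping. Each epoch costs one full batch of size $b'$ (actually $\lceil b'/m\rceil$ per iteration is the amortized cost, so $b'$ total at the epoch head) plus $m-1$ recursive steps at $2b$ oracle calls each; there are $n=\lceil T/m\rceil$ epochs, so the total is $T(\lceil b'/m\rceil+2b)$. Substituting $M_\delta=dL/\delta$, $L_\delta=c\sqrt d L/\delta$, $\sigma_\delta^2=\Theta(dL^2)$, $\Delta_\delta=\Delta+L\delta$ and tracking the powers of $d$, $L$, $\epsilon$, $\delta$ gives $\mathcal{O}(d^{3/2}(L^3\epsilon^{-3}+\Delta L^2\delta^{-1}\epsilon^{-3}))$; the $\Delta_\delta=\Delta+L\delta$ split is what produces the two separate terms. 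I expect the main obstacle to be purely in the constant-chasing: verifying that the stated $\eta,m,b$ simultaneously make $(\ast)\ge0$ and give the claimed complexity when all the $\lceil\cdot\rceil$'s are expanded — one must check $m=\Theta(L_\delta\sqrt{b'}/M_\delta)=\Theta(\sqrt d)$ (so the ceiling is harmless for small $\epsilon$), $b'/m=\Theta(\sqrt d L^2/\epsilon^2)$, $b=\Theta(b'/m)$, $\eta T=\Theta(\Delta_\delta/\epsilon^2)$, and that the $+L\delta$ in $\Delta_\delta$ contributes only to the lower-order $L^3/\epsilon^3$ term after multiplying through. None of this is deep, but it is where a sign error or a misplaced factor of $\sqrt d$ would hide, so I would write it out carefully.
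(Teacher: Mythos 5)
Your proposal is correct and follows essentially the same route as the paper's proof: combine the descent and variance lemmas into the epoch-wise bound of Corollary~\ref{cor:epoch}, use the stated $\eta,m,b$ to drop the $(\ast)$ term and obtain (\ref{eq:main})--(\ref{eq:station}), pick $b'$ and $T$ to force $\BE\Vert\nabla f_\delta(x_{\rm out})\Vert^2\le\epsilon^2$, pass to Goldstein stationarity via Jensen and Proposition~\ref{prop:smooth}, and count oracles as $T(\lceil b'/m\rceil+2b)$. Your constant-chasing caveats (e.g.\ whether the stated ceilings make $(\ast)$ nonnegative exactly) are at the same level of granularity the paper itself leaves implicit, so nothing substantive is missing.
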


Theorem \ref{thm:GFM+} achieves the best known stochastic zeroth-order oracle complexity for finding a $(\delta,\epsilon)$-Goldstein stationary point of nonsmooth nonconvex stochastic optimization problem, which improves upon GFM \citep{lin2022gradient} in the dependency of both $\epsilon$ and $L$.

%Interestingly, such improved dependency on $\epsilon$ and $L$ also matches {(\color{red}why match?)} the state-of-the-art first-order deterministic methods \citep{zhang2020complexity,tian2022finite,davis2021gradient} and is better than the stochastic INGD \citep{zhang2020complexity,tian2022finite}.We provide the comparison in Table~\ref{tab:res}.

\subsection{The Results for Convex Optimization} \label{sec:convex}

This subsection extends the idea of GFM$^+$ to find  $(\delta,\epsilon)$-Goldstein stationary points for nonsmooth convex optimization problem.
We propose warm-started GFM$^+$ (WS-GFM$^+$) in Algorithm \ref{alg:2GFM+}, which initializes GFM$^+$ with GFM.
%We propose the warm-started GFM (WS-GFM) and warm-started GFM$^+$ (WS-GFM$^+$) in Algorithm \ref{alg:2GFM} and \ref{alg:2GFM+} respectively. 

The complexity analysis of WS-GFM$^+$ requires the following assumption. 
\begin{asm} \label{asm:c}
We suppose that objective $f:\BR^d\to\BR$ is convex and the set $\fX^* := \arg \min_{x \in \BR^d} f(x)$ is non-empty.
\end{asm}

We remark the assumption of non-empty $\fX^*$ is stronger than Assumption \ref{asm:lower} since the Lipschitzness of $f$ implies  $f(x_0) - \inf_{x \in \BR^d} f(x) \le L\,{\rm dist}(x_0, \fX^*)$.
Therefore, under Assumption \ref{asm:c}, directly using GFM$^+$ (Algorithm~\ref{alg:GFM+}) requires 
\begin{align}\label{complexity:GFM+-cvx}
    \mathcal{O} \left( d^{3/2} \left( \frac{L^3}{\epsilon^3}+ \frac{ L^3 R}{\delta  \epsilon^3}  \right)   \right)
\end{align}
iterations to find a $(\delta,\epsilon)$-Goldstein stationary point of $f$, where we denote~$R = {\rm dist}(x_0, \fX^*)$.

\begin{algorithm}[t]  
\caption{WS-GFM${}^+$($x_0,\eta_0,T_0,\eta,T,m,b,b'$)} \label{alg:2GFM+}
\begin{algorithmic}[1] 
\STATE $x_1= \text{GFM}(x_0,\eta_0,T_0 )$ \\[0.15cm]
\STATE $x_T = \text{GFM}{}^+(x_1,\eta,T,m,b,b')$ \\[0.15cm]
\STATE \textbf{return} $x_T$
\end{algorithmic}
\end{algorithm}

\begin{algorithm}[t]  
\caption{WS-GFM ($x_0,\eta_0,T_0, \eta, T $)} \label{alg:2GFM}
\begin{algorithmic}[1] 
\STATE $x_1= \text{GFM}(x_0,\eta_0,T_0 )$ \\[0.15cm]
\STATE $x_T = \text{GFM}(x_1,\eta,T)$ \\[0.15cm]
\STATE \textbf{return} $x_T$ 
\end{algorithmic}
\end{algorithm}

Next we show the warm-start strategy can improve the complexity in (\ref{complexity:GFM+-cvx}).
It is based on the fact that GFM obtains the optimal stochastic zeroth-order oracle complexity in the measure of function value~\citep{shamir2017optimal}. We include the proof in the appendix for completeness.
\begin{thm}[GFM in function value] \label{thm:upper-c}
Under Assumption \ref{asm:stoc}, \ref{asm:Lip} and \ref{asm:c}, if we run GFM (Algorithm \ref{alg:GFM}) with
$T = \left\lceil {2 \sigma_{\delta} R^2}/{\zeta} \right \rceil$, $\eta = R/\big(\sigma_{\delta} \sqrt{T}\big)$ and
$\delta = \zeta/(4L)$
where $\sigma_{\delta}$ follows definition in (\ref{dfn:const}),
then the output satisfies $\BE[f(x_{\rm out}) - f^* ] \le \zeta$ and the total stochastic zeroth-order oracle complexity is at most
$\fO(d  L^2 R^2 \zeta^{-2})$, where~$R= {\rm dist} (x_0, \fX^*)$.
\end{thm}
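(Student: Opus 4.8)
The plan is to reduce this to the known optimal guarantee for zeroth-order convex optimization via randomized smoothing. The key observation is that GFM (Algorithm~\ref{alg:GFM}) run on the stochastic component $F(\cdot;\xi)$ is precisely stochastic gradient descent applied to the smoothed surrogate $f_\delta$, because by Proposition~\ref{prop:zo} the estimator $g(x_t;w_t,\xi_t)$ is an unbiased estimate of $\nabla f_\delta(x_t)$ with second moment bounded by $\sigma_\delta^2 = 16\sqrt{2\pi}\,dL^2$. Moreover, since $f$ is convex, $f_\delta$ is convex as well (uniform smoothing preserves convexity, being an average of shifts of $f$), and by Proposition~\ref{prop:smooth} we have $\vert f_\delta(\cdot) - f(\cdot)\vert \le \delta L$, so the minimizers and distances of $f_\delta$ are close to those of $f$.

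First I would invoke the standard analysis of projection-free SGD for convex functions: if $x_{t+1} = x_t - \eta g_t$ where $\BE[g_t] = \nabla f_\delta(x_t)$, $\BE\Vert g_t\Vert^2 \le \sigma_\delta^2$, and $f_\delta$ is convex, then averaging the inequality $\BE[f_\delta(x_t) - f_\delta(x^*)] \le \frac{1}{2\eta}\big(\BE\Vert x_t - x^*\Vert^2 - \BE\Vert x_{t+1}-x^*\Vert^2\big) + \frac{\eta}{2}\sigma_\delta^2$ over $t = 0,\dots,T-1$ and telescoping gives, for $x^*$ a minimizer of $f$ nearest to $x_0$,
\begin{align*}
\BE[f_\delta(x_{\rm out}) - f_\delta(x^*)] \le \frac{\Vert x_0 - x^*\Vert^2}{2\eta T} + \frac{\eta \sigma_\delta^2}{2} = \frac{R^2}{2\eta T} + \frac{\eta\sigma_\delta^2}{2},
\end{align*}
where $x_{\rm out}$ is the uniform average (or uniform random choice, by convexity/Jensen) of the iterates and $R = {\rm dist}(x_0,\fX^*)$. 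Next I would convert this back to $f$: using $f_\delta(x_{\rm out}) \ge f(x_{\rm out}) - \delta L$ and $f_\delta(x^*) \le f(x^*) + \delta L = f^* + \delta L$, we get $\BE[f(x_{\rm out}) - f^*] \le \frac{R^2}{2\eta T} + \frac{\eta\sigma_\delta^2}{2} + 2\delta L$.

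Then I would plug in the stated parameter choices. With $\delta = \zeta/(4L)$ the smoothing-bias term $2\delta L = \zeta/2$; with $\eta = R/(\sigma_\delta\sqrt{T})$ the two remaining terms both equal $\frac{R\sigma_\delta}{2\sqrt{T}}$, summing to $\frac{R\sigma_\delta}{\sqrt{T}}$; and choosing $T = \lceil 2\sigma_\delta R^2/\zeta\rceil$ — here I suspect the intended reading makes $\frac{R\sigma_\delta}{\sqrt T}$ at most $\zeta/2$, so the total is at most $\zeta$ (I would double-check the exact constant/exponent bookkeeping in $T$ against the claimed bound, since $\sigma_\delta$ itself depends on $\delta$ and hence on $\zeta$). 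Finally, counting oracle calls: each of the $T$ iterations uses two function evaluations, so the complexity is $\fO(T) = \fO(\sigma_\delta R^2/\zeta)$; substituting $\sigma_\delta^2 = 16\sqrt{2\pi}\,dL^2$ and $\delta = \zeta/(4L)$ (note $\sigma_\delta$ does not actually depend on $\delta$) gives $\sigma_\delta = \Theta(\sqrt{d}\,L)$, hence $\fO(\sqrt{d}\,L R^2/\zeta)$ — which I would need to reconcile with the stated $\fO(d L^2 R^2 \zeta^{-2})$; the discrepancy suggests the averaging/parameter constants are chosen so that $T = \Theta(d L^2 R^2/\zeta^2)$, i.e. the ``$\sigma_\delta$'' in the statement of $T$ should really be read as $\sigma_\delta^2$. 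The main obstacle is thus not conceptual but bookkeeping: carefully tracking the dimension and Lipschitz dependence through $\sigma_\delta$, verifying the convexity of $f_\delta$, and ensuring the chosen $\eta$, $T$, $\delta$ indeed drive each of the three error terms below $\zeta/3$ (or $\zeta/2$, $\zeta/2$) to land exactly at the advertised $\fO(dL^2R^2\zeta^{-2})$ oracle count.
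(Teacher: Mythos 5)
Your proposal follows essentially the same route as the paper's proof: run the standard SGD telescoping argument on the convex smoothed surrogate $f_\delta$ using the unbiasedness and second-moment bound of Proposition \ref{prop:zo}, obtain $\BE[f_\delta(x_{\rm out})-f_\delta(x^*)]\le \frac{R^2}{2\eta T}+\frac{\eta\sigma_\delta^2}{2}$, and transfer back to $f$ via the $\vert f_\delta - f\vert\le L\delta$ bound of Proposition \ref{prop:smooth}, with $\delta=\zeta/(4L)$ contributing $\zeta/2$. Your bookkeeping concern is also well-founded: the paper's proof stops at $\frac{R\sigma_\delta}{2\sqrt{T}}+2L\delta$ without substituting the stated $T$, and the choice $T=\lceil 2\sigma_\delta R^2/\zeta\rceil$ is indeed a typo for $T=\Theta(\sigma_\delta^2 R^2/\zeta^2)$, which is exactly what yields the advertised $\fO(dL^2R^2\zeta^{-2})$ oracle complexity.
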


Theorem \ref{thm:upper-c} means using the output of GFM as the initialization for GFM$^+$ can make the term $\Delta$ in (\ref{eq:final-cmp}) be small. We denote $\zeta = f(x_1) - f^*$, then the total complexity in WS-GFM$^+$ is
\begin{align*}
    \fO \left( \frac{d^{3/2} L^3}{\epsilon^3} + \frac{d^{3/2} L^3 \zeta }{\delta \epsilon^3} + \frac{d  L^2 R^2}{\zeta^2} \right).
\end{align*}
Then an appropriate choice of $\zeta$ leads to the following result.
%{\color{red}In both Algorithm \ref{alg:2GFM} and \ref{alg:2GFM+}, we initial $x_1$ to be the output of the GFM with parameters setting according to Theorem \ref{thm:upper-c}. Then we can replace $\Delta$ with $\zeta$ in the complexity for the second phase in (\ref{eq:final-cmp}). Note that the setting of $\zeta$ is arbitrary, choosing the optimal $\zeta$ yields the following theoretical guarantees for WS-GFM and WS-GFM${}^+$.}

\begin{thm}[WS-GFM${}^+$] \label{thm:2GFM+}
Under Assumption \ref{asm:stoc}, \ref{asm:Lip} and \ref{asm:c}, 
Algorithm  \ref{alg:2GFM+} with an appropriate parameter setting can output  a  $(\delta,\epsilon)$-Goldstein stationary point in expectation within the stochastic zeroth-order oracle complexity of 
\begin{align*}
\mathcal{O}\left( \frac{d^{3/2} L^3}{\epsilon^3} + \frac{d^{4/3} L^2 R^{2/3}}{\delta^{2/3} \epsilon^2} \right),
\end{align*}
where $R= {\rm dist} (x_0, \fX^*)$.
\end{thm}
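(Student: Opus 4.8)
The plan is to chain together the two results that are already available: Theorem~\ref{thm:upper-c}, which controls the function-value gap after the warm-start phase (a pure GFM run), and Theorem~\ref{thm:GFM+}, which gives the $(\delta,\epsilon)$-Goldstein stationarity complexity of GFM$^+$ in terms of the \emph{initial} function-value gap $\Delta$. The only free parameter to tune is the target accuracy $\zeta$ of the warm-start phase. Concretely, I would first invoke Theorem~\ref{thm:upper-c} with accuracy $\zeta$ to produce $x_1$ satisfying $\BE[f(x_1)-f^*]\le\zeta$ at cost $\fO(dL^2R^2\zeta^{-2})$; then run GFM$^+$ from $x_1$, so that the relevant ``$\Delta$'' in \eqref{eq:final-cmp} is (in expectation) at most $\zeta$. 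Since the Goldstein-stationarity guarantee of GFM$^+$ is stated in expectation, one has to be slightly careful: condition on the output $x_1$ of the first phase, apply Theorem~\ref{thm:GFM+} conditionally with $\Delta$ replaced by $f(x_1)-f^*$, and then take the outer expectation, using linearity so that $\BE[f(x_1)-f^*]\le\zeta$ enters the bound. This yields the stated total complexity
\begin{align*}
\fO\!\left(\frac{d^{3/2}L^3}{\epsilon^3} + \frac{d^{3/2}L^3\zeta}{\delta\epsilon^3} + \frac{dL^2R^2}{\zeta^2}\right),
\end{align*}
as already recorded in the text just before the theorem.

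The remaining step is the optimization over $\zeta$. Ignoring the first (additive, $\zeta$-independent) term, I would balance the warm-start cost $dL^2R^2\zeta^{-2}$ against the $\zeta$-dependent part $d^{3/2}L^3\zeta\,\delta^{-1}\epsilon^{-3}$ of the GFM$^+$ cost. Setting these equal gives $\zeta^{3}\asymp \delta\epsilon^{3}R^2 d^{-1/2}L^{-1}$, i.e. $\zeta\asymp \delta^{1/3}\epsilon\,R^{2/3}d^{-1/6}L^{-1/3}$. Substituting back into $dL^2R^2\zeta^{-2}$ produces $dL^2R^2\cdot\delta^{-2/3}\epsilon^{-2}R^{-4/3}d^{1/3}L^{2/3} = d^{4/3}L^{8/3}R^{2/3}\delta^{-2/3}\epsilon^{-2}$; the $\zeta$-dependent GFM$^+$ term evaluates to the same order. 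Hmm — this matches the $d^{4/3}$ and $\delta^{-2/3}\epsilon^{-2}$ and $R^{2/3}$ factors claimed, but the $L$ exponent I get is $8/3$ rather than the $L^2$ in the statement. So the intended tuning must also account for the constraint coming from Theorem~\ref{thm:upper-c}, where $\delta$ and $\zeta$ are \emph{coupled} via $\delta=\zeta/(4L)$ in that theorem's hypotheses; one cannot freely pick $\zeta$ independently of $\delta$ when reusing Theorem~\ref{thm:upper-c} verbatim. The correct route is to apply the GFM-in-function-value bound $\fO(dL^2R^2\zeta^{-2})$ but with its own smoothing radius, so that the $\delta$ appearing in the second phase is a genuinely free parameter; then re-balance. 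I would carry this out carefully, keeping $\delta$ and $\zeta$ as separate knobs, and choose $\zeta$ to equate $dL^2R^2\zeta^{-2}$ with $d^{3/2}L^3\zeta\delta^{-1}\epsilon^{-3}$ only after confirming the dimensional/Lipschitz bookkeeping; the bound in the theorem indicates the final answer is $d^{4/3}L^2R^{2/3}\delta^{-2/3}\epsilon^{-2}$, so the $L$-counting in the warm-start term must be lighter than my first pass suggested.

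The main obstacle, then, is \textbf{not} the algebra of balancing but getting the precise interplay between the two phases' smoothing parameters and Lipschitz constants right: Theorem~\ref{thm:upper-c} is stated with a specific $\delta$–$\zeta$ link that is convenient for the function-value guarantee but not for the second-phase stationarity target, so I expect to need a mild restatement (or a direct re-derivation) of the GFM function-value bound with $\delta$ left free, after which the $\zeta$-optimization is routine. A secondary technical point is the in-expectation composition of the two guarantees, handled by conditioning as above; and one should finally verify that the chosen $\zeta$ indeed makes the warm-start cost no larger than the additive $d^{3/2}L^3\epsilon^{-3}$ term in the regime of interest, so that the stated two-term bound is tight.
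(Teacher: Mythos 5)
Your overall plan is exactly the paper's: add the warm-start cost $\fO(dL^2R^2\zeta^{-2})$ from Theorem~\ref{thm:upper-c} to the GFM$^+$ cost from Theorem~\ref{thm:GFM+} applied with $\Delta\le\zeta$, then tune $\zeta$ (the paper does precisely this, via the inequality $3\sqrt[3]{a^2b}\le 2a\zeta+b/\zeta^2$). The gap is that you never actually reach the claimed bound, and the way out you propose is a misdiagnosis. The $L^{8/3}$-vs-$L^2$ mismatch you hit comes from the intermediate display you trusted, whose $\zeta$-dependent term $d^{3/2}L^3\zeta\,\delta^{-1}\epsilon^{-3}$ is inconsistent with (\ref{eq:final-cmp}): Theorem~\ref{thm:GFM+} gives $\fO\big(d^{3/2}(L^3\epsilon^{-3}+\Delta L^2\delta^{-1}\epsilon^{-3})\big)$, i.e.\ the $\Delta$-dependent part carries $L^2$, not $L^3$ (the $L^3\epsilon^{-3}$ term already accounts for the $L\delta$ portion of $\Delta_\delta$). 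Substituting $\Delta\le\zeta$, the second-phase cost to balance is $d^{3/2}L^2\zeta\,\delta^{-1}\epsilon^{-3}$; equating it with $dL^2R^2\zeta^{-2}$ gives $\zeta\asymp d^{-1/6}\delta^{1/3}\epsilon R^{2/3}$ and both terms become $d^{4/3}L^2R^{2/3}\delta^{-2/3}\epsilon^{-2}$, which is exactly the theorem. So the correction lives in the GFM$^+$ term, not in the warm-start term; your exponents for $d,\delta,\epsilon,R$ were already right.

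Consequently, your proposed fix --- that the $\delta=\zeta/(4L)$ coupling inside Theorem~\ref{thm:upper-c} forces a restatement of the GFM function-value bound with a free smoothing radius, and that ``the $L$-counting in the warm-start term must be lighter'' --- does not hold up. The $\delta$ in Theorem~\ref{thm:upper-c} is the smoothing radius used internally by the warm-start GFM run; it is a separate, freely chosen parameter, unrelated to the target radius $\delta$ of the Goldstein stationarity sought in the second phase, and the warm-start complexity $\fO(dL^2R^2\zeta^{-2})$ already incorporates that internal choice and contains no $\delta$ at all. The two phases' smoothing radii are therefore already decoupled, no re-derivation is needed, and the warm-start cost genuinely is of order $dL^2R^2\zeta^{-2}$. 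Your remark on composing the two guarantees in expectation by conditioning on $x_1$ (setting the second-phase parameters via $\E[f(x_1)-f^*]\le\zeta$ rather than the random gap) is sound and indeed more careful than the paper, which omits it; and your final sanity check is immediate, since at the chosen $\zeta$ the warm-start cost coincides with the balanced term, so the three-term bound collapses to the two terms stated in the theorem.
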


Naturally, we can also use the idea of warm-start to obtain the complexity of GFM~\cite{lin2022gradient} for convex case. We present warm-started GFM (WS-GFM) in Algorithm \ref{alg:2GFM} and provide its theoretical guarantee as follows.

\begin{thm}[WS-GFM] \label{thm:2GFM}
Under Assumption \ref{asm:stoc}, \ref{asm:Lip} and \ref{asm:c}, 
Algorithm \ref{alg:2GFM} with an appropriate parameter setting can output a  $(\delta,\epsilon)$-Goldstein stationary point in expectation within the stochastic zeroth-order oracle complexity of
\begin{align*}
\mathcal{O}\left( \frac{d^{3/2} L^4}{\epsilon^4} + \frac{d^{4/3} L^{8/3} R^{2/3}}{\delta^{2/3} \epsilon^{8/3}} \right),
\end{align*}
where $R= {\rm dist} (x_0, \fX^*)$.
\end{thm}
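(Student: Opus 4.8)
The plan is to mirror the analysis of WS-GFM$^+$ (Theorem \ref{thm:2GFM+}), replacing the second-phase call to GFM$^+$ by a second call to GFM. First I would invoke Theorem \ref{thm:upper-c} to control the first (warm-start) phase: running GFM as $x_1 = \mathrm{GFM}(x_0,\eta_0,T_0)$ with the parameters prescribed there yields, for any target accuracy $\zeta>0$, a point with $\BE[f(x_1)-f^*]\le \zeta$ at a cost of $\fO(dL^2R^2\zeta^{-2})$ zeroth-order queries, where $R = {\rm dist}(x_0,\fX^*)$. The key point is that convexity lets us convert the distance bound $R$ into a function-value gap that can be driven down cheaply, so that the $\Delta$ appearing in the GFM guarantee for the second phase is replaced by $\zeta$ (more precisely by $\Delta_\delta = \zeta + L\delta$, but $L\delta$ will be dominated after optimizing).

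Next I would recall the complexity of GFM for the nonconvex (stationary-point) guarantee, namely that $\mathrm{GFM}(x_1,\eta,T)$ finds a $(\delta,\epsilon)$-Goldstein stationary point of $f$ in expectation within $\fO\big(d^{3/2}(L^4\epsilon^{-4} + \Delta L^3\delta^{-1}\epsilon^{-4})\big)$ queries, as established in \citet{lin2022gradient} and recorded in Table \ref{tab:res}. Applying this with the initial sub-optimality $\Delta$ replaced by $\zeta = \BE[f(x_1)-f^*]$, the second phase costs $\fO\big(d^{3/2}L^4\epsilon^{-4} + d^{3/2}L^3\zeta\,\delta^{-1}\epsilon^{-4}\big)$. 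Adding the two phases, the total cost is
\begin{align*}
\fO\left( \frac{d^{3/2}L^4}{\epsilon^4} + \frac{d^{3/2}L^3\zeta}{\delta\epsilon^4} + \frac{dL^2R^2}{\zeta^2} \right).
\end{align*}
Now I would optimize over the free parameter $\zeta$: the two $\zeta$-dependent terms are $d^{3/2}L^3\delta^{-1}\epsilon^{-4}\zeta$ and $dL^2R^2\zeta^{-2}$, and balancing them (setting the two equal, i.e. $\zeta^3 \asymp d^{-1/2}L^{-1}R^2\delta\epsilon^4$, so $\zeta \asymp d^{-1/6}L^{-1/3}R^{2/3}\delta^{1/3}\epsilon^{4/3}$) makes each of them equal to $\fO\big(d^{4/3}L^{8/3}R^{2/3}\delta^{-2/3}\epsilon^{-8/3}\big)$, which is exactly the claimed second term. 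One must also check the side conditions imposed by Theorem \ref{thm:upper-c} (in particular that the smoothing radius $\delta = \zeta/(4L)$ demanded there is consistent with the prescribed $\delta$, which is handled by absorbing constants and noting the $L\delta$ term in $\Delta_\delta$ is of the same order as $\zeta$) and that the resulting $\zeta$ is a legitimate choice (positive, and not so large that the warm-start is vacuous — if $\zeta \gtrsim \Delta$ one simply skips the warm-start and the bare GFM bound already suffices).

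The main obstacle I anticipate is bookkeeping rather than a conceptual difficulty: one has to be careful that the smoothing parameter $\delta$ plays two roles — it is fixed in advance by the target $(\delta,\epsilon)$ accuracy, yet Theorem \ref{thm:upper-c} also ties $\delta$ to the warm-start accuracy $\zeta$ — and reconcile these by choosing $\zeta$ large enough that $\delta \lesssim \zeta/L$ holds, then verifying this is compatible with the balancing choice above. A secondary subtlety is that GFM's guarantees are stated in expectation, so the composition $x_1 \mapsto \mathrm{GFM}(x_1,\cdot)$ requires a tower-of-expectations argument: conditioning on $x_1$, the second-phase bound holds with $\Delta$ replaced by the realized $f(x_1)-f^*$, and taking expectation over the first phase replaces it by $\zeta$; since all the GFM bounds are linear in $\Delta$, this conditioning goes through cleanly. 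Once these points are settled, the stated complexity follows by substituting the optimized $\zeta$.
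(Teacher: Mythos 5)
Your proposal is correct and follows essentially the same route as the paper: warm-start via Theorem \ref{thm:upper-c} at cost $\fO(dL^2R^2\zeta^{-2})$, then run GFM from $x_1$ with the suboptimality $\Delta$ replaced by $\zeta$, and balance the $\zeta$-dependent terms. In fact your bookkeeping (using the $d^{3/2}L^3\zeta\,\delta^{-1}\epsilon^{-4}$ term for the second phase) is the one consistent with the exponents $L^{8/3}$, $\epsilon^{-8/3}$ in the theorem statement, whereas the paper's appendix writes $d^{3/2}L^4\zeta\,\delta^{-1}\epsilon^{-4}$ at this step, which appears to be a typo.
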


%It again shows the superiority of the  GFM${}^+$ compared with the GFM for finding a Goldstein stationary point. 

%{\color{red}What happens when run GFM/GFM$^+$ without warm-start?}

\section{Numerical Experiments} \label{sec:exp}

\begin{figure*}[t]\centering
\begin{tabular}{ccc}
\includegraphics[scale=0.3]{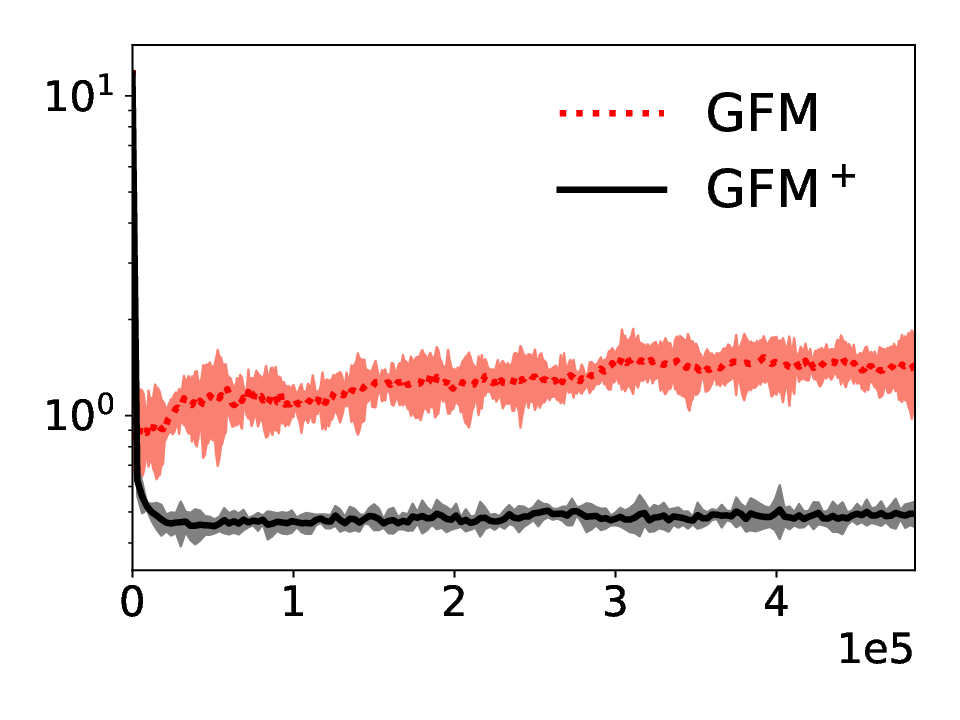} &
\includegraphics[scale=0.3]{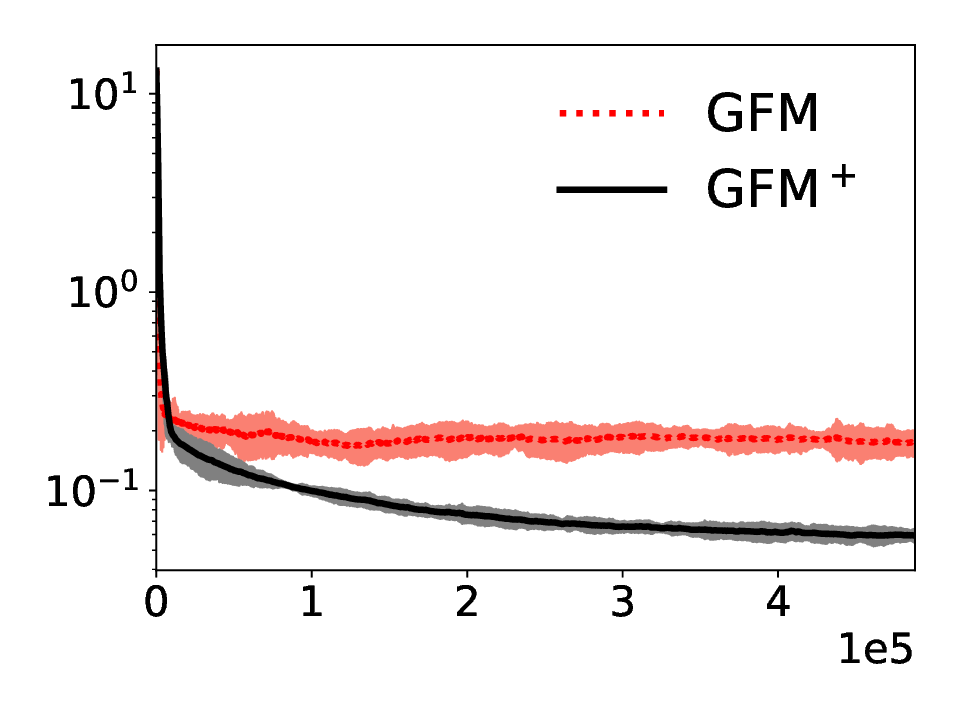} &
\includegraphics[scale=0.3]{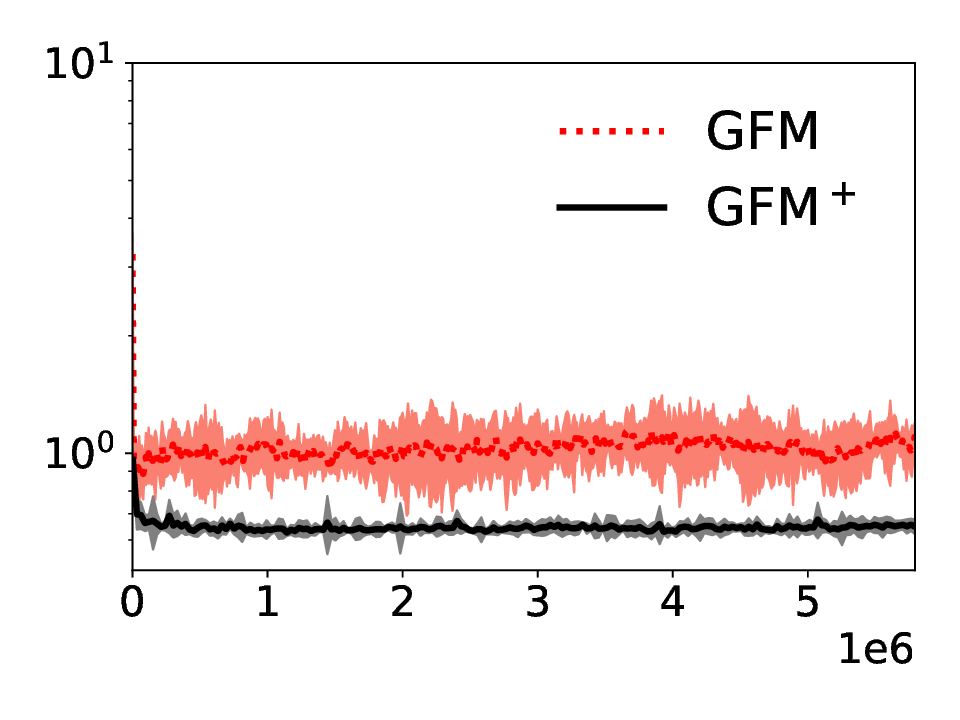} \\
(a) a9a & (b) w8a & (c) covtype \\
\includegraphics[scale=0.3]{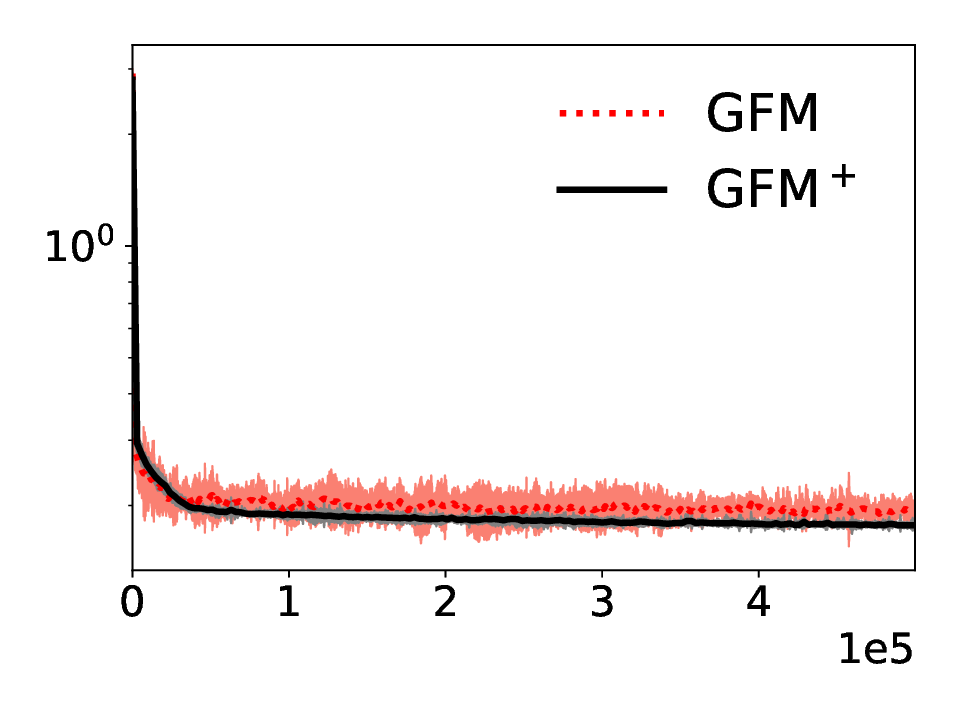} &
\includegraphics[scale=0.3]{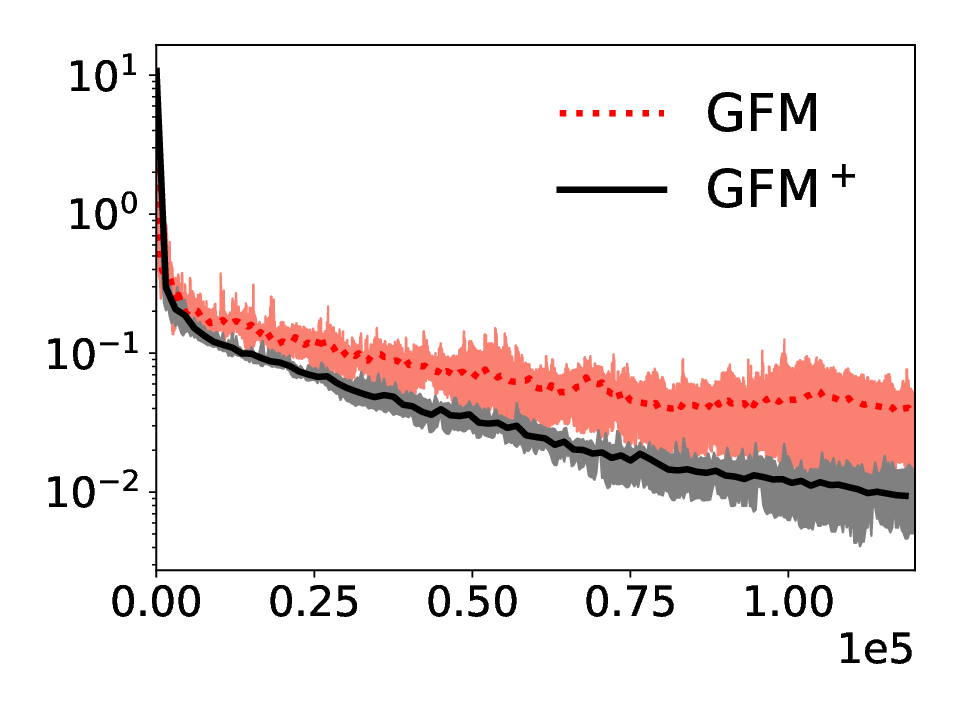} &
\includegraphics[scale=0.3]{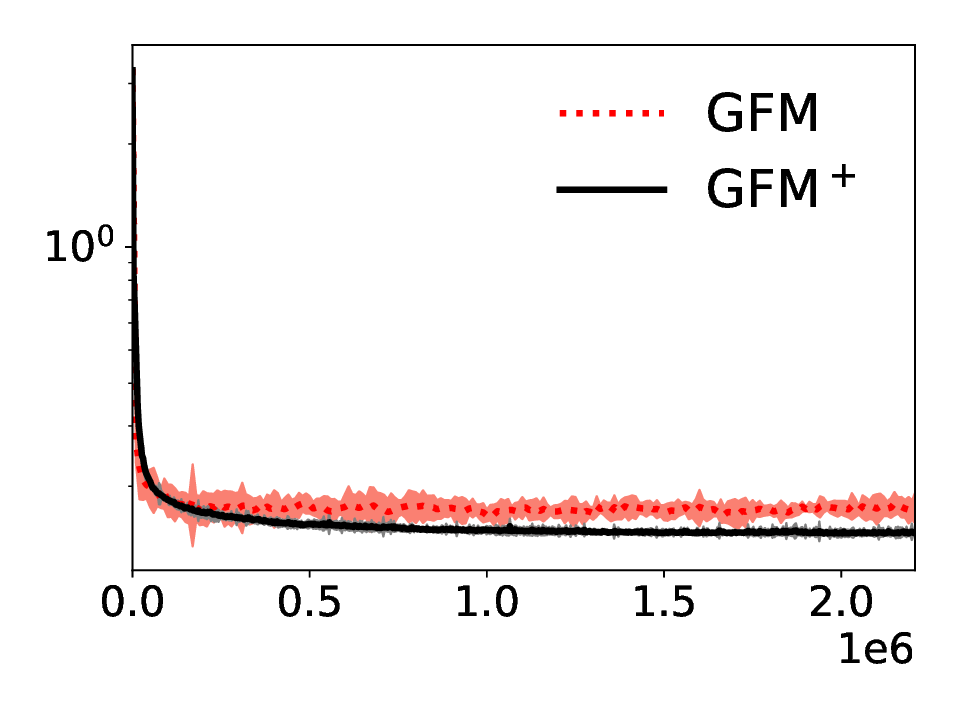} \\
(e) ijcnn1 & (f) mushrooms & (g) phishing \\
\end{tabular}
\caption{For nonconvex penalized SVM, we present the  results for \textbf{complexity  \emph{vs.}loss} on datasets ``a9a'', ``w8a'', ``covtype'', ``ijcnn1'', ``mushrooms'' and ``phishing''. The result for each method is averaged over 20 independent runs.
}\vskip-0.08cm
\label{fig:exp}
\end{figure*}

In this section, we conduct the numerical experiments on nonconvex penalized SVM and black-box attack to show the empirical superiority of proposed GFM$^+$.

\subsection{Nonconvex Penalized SVM}

We consider the nonconvex penalized
SVM with capped-$\ell_1$ regularizer~\citep{zhang2010analysis}.
The model targets to train the binary classifier $x\in\BR^d$ on dataset~$\{(a_i, b_i)\}_{i=1}^{n}$, where~$a_i\in\BR^d$ and $b_i \in \{1,-1 \}$ are the feature of the~$i$-th sample and its corresponding label.
It is formulated as the following nonsmooth nonconvex problem
\begin{align*}
    \min_{x\in\BR^d} f(x) \triangleq \frac{1}{n} \sum_{i=1}^n l(b_i a_i^\top x) +  r(x), 
\end{align*}
where $l(x)=\max\{1-x,0\}$, $r(x) = \lambda \sum_{j=1}^d\!\min \{ \vert x_j \vert,\alpha\}$ and $\lambda, \alpha>0$ are hyperparamters.
We take $\lambda = 10^{-5} / n$ and $\alpha =2$ in our experiments.

We compare the proposed GFM$^+$ with GFM \citep{lin2022gradient} on LIBSVM datasets ``a9a'', ``w8a'', ``covtype'', ``ijcnn1'', ``mushrooms'' and ``phishing''~\cite{chang2011libsvm}.
%whose details are listed in Table~\ref{tab:data}. 
We set $\delta = 0.001$ and tune the stepsize $\eta$ from~$\{ 0.1,0.01,0.001\}$ for the two algorithms. For GFM$^+$, we tune both $m$ and $b$ in $\{1,10,100 \}$ and set $b' = mb$ by following our theory.
%We use the same parameters {\color{red}$\eta = 0.01$ and $\delta=0.001$} for both the two algorithms. For GFM$^+$, we set $\vert S \vert =1$ and $m = \vert S' \vert = 10$. 
We run the algorithms with twenty different random seeds for each dataset and demonstrate the results in Figure \ref{fig:exp}, where the vertical axis represents the mean of loss calculated by the twenty runs and the horizontal axis represents the zeroth-order complexity.
It can be seen that GFM$^+$ leads to faster convergence and improve the stability in the training.

%$r(x) = \lambda \sum_{j=1}^d \min \{ \vert x_j \vert ,\alpha \}$ is the nonconvex Capped-$\ell_1$ regularizer \citep{zhang2010analysis} with the parameters $\lambda = 10^{-5}$ and $\alpha =2$. 

\subsection{Black-Box Attack on CNN}

\begin{figure*}[t]
    \centering
    \begin{tabular}{c c}
    \qquad\includegraphics[scale=0.36]{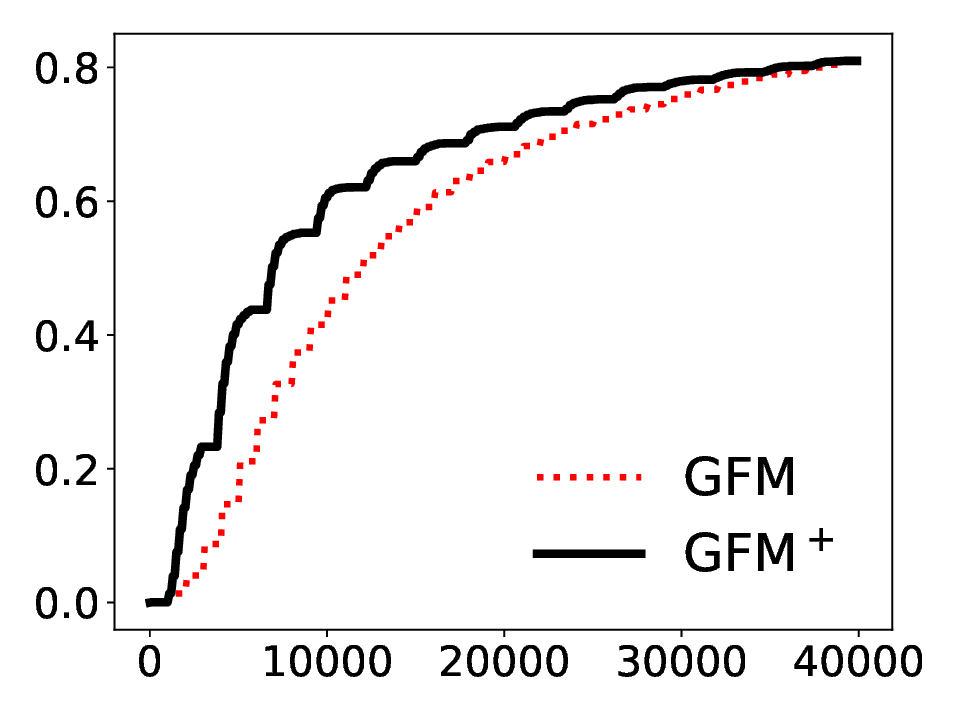}\qquad    &
    \qquad\includegraphics[scale=0.36]{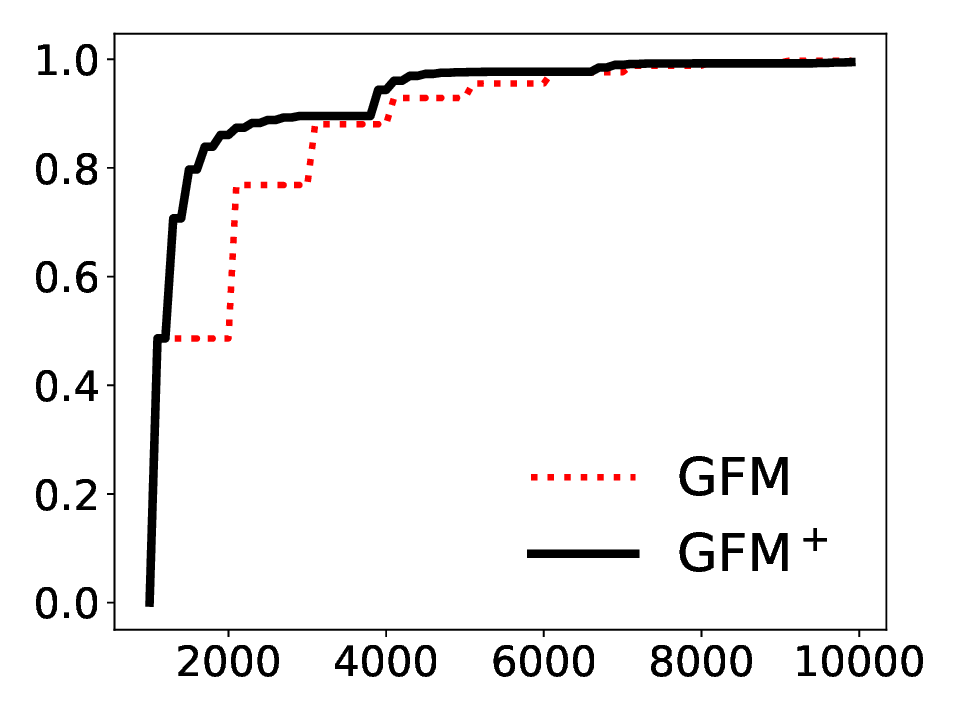}\qquad \\
    (a) MNIST     & (b) Fashion-MNIST 
    \end{tabular}
    \caption{For black-box attack, we present the results for \textbf{complexity \emph{vs.} success rate} on datasets ``MNIST'' and ``Fashion-MNIST''.}
    \label{fig:attack}\vskip-0.07cm
\end{figure*}

We consider the untargeted black-box adversarial attack on image classification with convolutional neural network (CNN).
For a given sample $z \in\BR^d$, we aim to find a sample $x\in\BR^d$ that is close to $z$ and leads to misclassification.
%The un-targeted adversarial attack aims to find an example x of the given image x0 with label` but misclassified by the neural network. In this case, we will minimize the following function
%for a given image $x_0 \in \BR^d$. The goal is to generate an adversarial example $x \in \BR^d$ which is close to $x_0$ and let the given convolution neural network  misclassify it. 
We formulate the problem as the following nonsmooth nonconvex problem \citep{carlini2017towards}:
\begin{align*}
\begin{split}
        \min_{\Vert x - z \Vert_{\infty} \le \kappa} \max\{ \log [Z(x)]_t -  \max_{i \ne t }\log [Z(x)]_i, - \theta  \},
\end{split}
\end{align*}
where  {$\kappa$ is the constraint level of the distortion}, $t$ is the class of sample $z$ and $Z(x)$ is the logit layer representation after softmax in the CNN for $x$ such that~$[Z(x)]_i$ represents the predicted probability that $x$ belongs to class~$i$.
We set~$\theta = 4$ and $\kappa =0.2$ for our experiments.
To address the constraint in the problem, we heuristically conduct  additional projection step for the update of $x$ in GFM and GFM$^{+}$.
%{\color{red} present algorithm and analysis for constraint case in appendix}
%We train a CNN with LeNet architecture~{\color{red}(reference)} (detailed in Appendix \ref{apx:exp}) 

We compare the proposed GFM$^+$ with GFM \citep{lin2022gradient} on datasets MNIST and Fashion-MNIST. We use a  LeNet \citep{lecun1998gradient} type CNN which consists of two 5x5 convolution layers two fully-connected layers; 
each convolution layer is associated with max pooling and ReLU
activation;
each fully-connected layer is associated with ReLU activation. The detail of the network is presented in Appendix \ref{apx:exp}.
We train the CNN with SGD by 100 epochs with stepsize starting with 0.1 and decaying by $1/2$ every 20 epochs. 
It achieves classification accuracies $98.95\%$  on the MNIST  and $91.94\%$ on the Fashion-MNIST. 
We use GFM$^+$ and GFM to attack the trained CNNs on all of 10,000 images on testsets and set $\delta =0.01$.
%{\color{red}
%We project every point in the iterate onto the constraint set.} Although the GFM in \citep{lin2022gradient} only uses two function value queries to in each iteration, we use a mini-batch estimator as Definition \ref{dfn:bgrad} with batch size $b'$  to  achieve the best efficiency of GPU. 
For both GFM and GFM$^+$, we tune $b'$ from $\{500,1000,2000\}$. 
For GFM${}^+$, we additionally tune $m$ from $\{10,20, 50 \}$ and set $b = b'/m$.
For both the two algorithms, we tune the initial learning rate $\eta$ in $\{0.5,0.05,0.005\}$ and decay by $1/2$ if there is no improvement in 10 iterations at one attack.
The experiment results are shown in Figure \ref{fig:attack}, where the vertical axis represents the success attack rate in the whole dataset and the horizontal axis represents the zeroth-order complexity. It shows that the GFM${}^+$ can achieve a better attack success rate than the GFM, within the same number of zeroth-order oracles. 
%and we can observe that GFM$^+$ has better performance.

\section{Conclusion}

This paper proposes the novel zeroth-order algorithm GFM$^{+}$ for stochastic nonsmooth nonconvex optimization.
We prove that the algorithm could find a $(\epsilon,\delta)$-Goldstein stationary point of the Lipschitz objective in expectation within 
at most $\mathcal{O}({ L^3 d^{3/2}}{\epsilon^{-3}}+ \Delta L^2 d^{3/2} \delta^{-1} \epsilon^{-3})$ stochastic zeroth-order oracle complexity, which improves the best known result of GFM~\cite{lin2022gradient}. The numerical experiments on nonconvex penalized SVM and black-box attack on CNN also validate the advantage of GFM$^{+}$. 
However, the tightness for the complexity of GFM$^{+}$ is still unclear. It is interesting to study the zeroth-order and first-order oracle lower bounds for the task of finding approximate Goldstein stationary points. 

\section*{Acknowledgements}

This work is supported by National Natural Science Foundation of China (No. 62206058) and Shanghai Sailing Program (22YF1402900). The authors thank Zhichao Huang for sharing the code for black-box attack on CNN.

\bibliography{reference}
\bibliographystyle{icml2023}

%%%%%%%%%%%%%%%%%%%%%%%%%%%%%%%%%%%%%%%%%%%%%%%%%%%%%%%%%%%%%%%%%%%%%%%%%%%%%%%
%%%%%%%%%%%%%%%%%%%%%%%%%%%%%%%%%%%%%%%%%%%%%%%%%%%%%%%%%%%%%%%%%%%%%%%%%%%%%%%
% APPENDIX
%%%%%%%%%%%%%%%%%%%%%%%%%%%%%%%%%%%%%%%%%%%%%%%%%%%%%%%%%%%%%%%%%%%%%%%%%%%%%%%
%%%%%%%%%%%%%%%%%%%%%%%%%%%%%%%%%%%%%%%%%%%%%%%%%%%%%%%%%%%%%%%%%%%%%%%%%%%%%%%
\newpage
\appendix
\onecolumn

\section{The Proof of Proposition \ref{prop:avgL}} 

\begin{proof}
According to the definition of $g(x;w,\xi)$, we use Young's inequality and Proposition \ref{prop:msLip} to obtain
\begin{align*}
    &\quad \BE_{\xi }\Vert g(x;w,\xi) - g(y;w,\xi) \Vert^2 \\
    &\le \frac{d^2}{2 \delta^2} \BE_{\xi }\Vert F(x+ \delta w;\xi) - F(y+\delta w;\xi) \Vert^2 + \frac{d^2}{2 \delta^2} \BE_{\xi} \Vert F(x- \delta w;\xi) - F(y-\delta w;\xi) \Vert^2  \\
    &\le   \frac{d^2L^2}{\delta^2} \Vert x - y \Vert^2.
\end{align*}
\end{proof}

\section{The Tightness of $L_\delta$ and $M_\delta$} \label{apx:tight}

We formally show the tightness of $L_\delta$ and $M_\delta$ as follows.

\begin{prop} \label{tight:L_delta}
The order of Lipschitz constant $L_{\delta}= \fO(\sqrt{d}L \delta^{-1})$ for the gradient of $f_\delta(\,\cdot\,)$ is tight.
\end{prop}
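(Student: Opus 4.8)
The plan is to exhibit an explicit $L$-Lipschitz function $f:\BR^d\to\BR$ for which the smoothed surrogate $f_\delta$ has a gradient that genuinely varies at rate $\Omega(\sqrt d\,L\delta^{-1})$, so that the upper bound in Proposition~\ref{prop:smooth} cannot be improved in its dependence on $d$, $L$, and $\delta$. The natural candidate is a one-dimensional ridge function lifted to $\BR^d$, for instance $f(x) = L\,\lvert x_1 \rvert$ or, to get a clean two-sided estimate, something like $f(x) = L\,\max\{x_1, 0\}$ or $f(x)=L\,\lvert\langle a, x\rangle\rvert$ with $\lVert a\rVert = 1$. Such an $f$ is $L$-Lipschitz, and by symmetry $f_\delta$ also depends only on the coordinate $x_1$ (resp.\ $\langle a,x\rangle$), so computing $\nabla f_\delta$ reduces to a one-dimensional calculation with the marginal density of the first coordinate of a uniform point in $\sB_1(0)$.

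First I would write $f_\delta(x) = \BE_{u\sim\mathcal P}[\,L\lvert x_1 + \delta u_1\rvert\,] = \BE[\,L\lvert x_1 + \delta u_1\rvert\,]$, where $u_1$ is the first coordinate of a uniform sample from the unit ball. Its density $\rho_d$ on $[-1,1]$ is proportional to $(1-s^2)^{(d-1)/2}$, which concentrates around $0$ at scale $1/\sqrt d$; in particular $\rho_d(0) = \Theta(\sqrt d)$. Differentiating under the expectation, $\partial_{x_1} f_\delta(x) = L\,\BE[\sign(x_1+\delta u_1)] = L\,\big(\Pr[u_1 > -x_1/\delta] - \Pr[u_1 < -x_1/\delta]\big)$, which as a function of $x_1$ has derivative $(2L/\delta)\,\rho_d(-x_1/\delta)$ at points where $\lvert x_1\rvert < \delta$. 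Evaluating at $x_1 = 0$ gives a second derivative of order $L\sqrt d/\delta$, hence $\lVert \nabla f_\delta(x) - \nabla f_\delta(y)\rVert \ge c'\,L\sqrt d\,\delta^{-1}\,\lvert x_1 - y_1\rvert$ for $x,y$ in a $\delta$-neighborhood of the origin along the first axis. This matches $L_\delta = c\sqrt d L\delta^{-1}$ up to the numeric constant, establishing tightness.

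The main obstacle is the density estimate $\rho_d(0) = \Theta(\sqrt d)$, together with a lower bound on $\rho_d$ on a macroscopic sub-interval (say $\lvert s\rvert \le 1/(2\sqrt d)$) so that the Lipschitz-constant witness points $x,y$ can be taken a fixed fraction of $\delta$ apart rather than infinitesimally close; this is a standard beta-function / Stirling computation, since $\rho_d(s) = (1-s^2)^{(d-1)/2}/B$ with $B = \int_{-1}^1 (1-s^2)^{(d-1)/2}\,ds = \sqrt\pi\,\Gamma(\tfrac{d+1}{2})/\Gamma(\tfrac{d}{2}+1) = \Theta(1/\sqrt d)$. I would also need to handle the mild non-smoothness of $f_\delta$ at $x_1 = \pm\delta$ when passing from the pointwise second-derivative bound to the two-point Lipschitz statement, but restricting attention to $\lvert x_1\rvert, \lvert y_1\rvert \le \delta/2$ avoids this entirely. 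Everything else — the reduction to one dimension by rotational symmetry, differentiating under the integral, and assembling the final inequality — is routine.
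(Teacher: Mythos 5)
Your proposal is correct, but it proves tightness by a genuinely different route than the paper. The paper's proof is citation-based: it invokes Lemma~10 of Duchi et al.\ (2012), a product ("uncertainty principle'') bound $E_U(f)\,E_V(f)\ge c' L^2\sqrt d$ attained by a specific hard function of the form $\tfrac{L}{2}\Vert x\Vert+\tfrac L2\vert x^\top y/\Vert y\Vert^2-\tfrac12\vert$, and combines it with the uniform approximation bound $\vert f_\delta - f\vert\le L\delta$ to conclude $E_V(f)\ge c'L\sqrt d\,\delta^{-1}$. You instead take the ridge function $f(x)=L\vert x_1\vert$, reduce to one dimension, and compute the curvature of the smoothed kink directly: $\partial_{x_1}f_\delta$ equals $L\,\E[\sign(x_1+\delta u_1)]$, its derivative is $(2L/\delta)\rho_d(-x_1/\delta)$, and since the marginal density of one coordinate of a uniform ball point satisfies $\rho_d(0)=\Theta(\sqrt d)$ (the beta/Stirling estimate you flag), the Hessian at the origin is $\Theta(\sqrt d\,L\delta^{-1})$; this already lower-bounds the Lipschitz constant of $\nabla f_\delta$, and your restriction to $\vert s\vert\le 1/(2\sqrt d)$ upgrades it to a two-point witness. (Minor wording quibble: the witness points end up $\delta/(2\sqrt d)$ apart, not a fixed fraction of $\delta$, but for a Lipschitz-constant lower bound even infinitesimally close points suffice, so nothing is lost.) What each approach buys: yours is elementary, explicit, and self-contained, with the $\sqrt d$ visibly coming from the concentration of the ball's marginal at scale $1/\sqrt d$; the paper's is shorter given the external lemma and yields a stronger message --- for its hard instance, \emph{any} smoothing achieving uniform error $O(L\delta)$ must have gradient-Lipschitz constant $\Omega(\sqrt d\,L\delta^{-1})$, so the blow-up is not an artifact of the uniform-ball smoothing --- whereas your argument certifies tightness only for $f_\delta$ as defined, which is all Proposition~\ref{tight:L_delta} actually claims.
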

\begin{proof}
According to Lemma 10 of \citet{duchi2012randomized}, it holds that
\begin{align*}
    E_U(f) E_{V}(f) \ge c' L^2 \sqrt{d}
\end{align*}
for some constant $c' >0$, where $E_U(f)$ and $E_V(f) $ are defined as
\begin{align*}
    E_U(f) &= \inf_{\kappa \in \BR} \sup_{x \in \BR^d} \left\{\vert f(x) - f_{\delta}(x) \vert \le \kappa \right\}
\end{align*}
and
\begin{align*}
    E_V(f) &= \inf_{\kappa \in \BR } \sup_{x ,y \in \BR^d} \left\{ \Vert \nabla f_{\delta}(x) - \nabla f_{\delta}(y) \Vert \le \kappa \Vert x - y \Vert\right\}.
\end{align*}
The above lower bound can be taken by the function
\begin{align*}
    f(x) = \frac{L}{2} \Vert x \Vert + \frac{L}{2} \left \vert \frac{x^\top y}{\Vert y \Vert^2} - \frac{1}{2} \right\vert.  
\end{align*}
Using Proposition \ref{prop:zo}, we know that $E_U(f) \le L \delta$ and it immediately implies our claim.
\end{proof}

\begin{prop} \label{tight:M_delta}
The order of mean-squared Lipschitz constant $M_{\delta} = \fO(dL\delta^{-1})$ for the stochastic zeroth-order gradient estimator $g(\,\cdot\,;w,\xi)$ is tight.
\end{prop}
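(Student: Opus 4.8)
The plan is to exhibit a single $L$-Lipschitz instance, together with a direction $w \in \BS$ and points $x,y$, for which the inequality of Proposition~\ref{prop:avgL} holds with equality. Since the factor $d^2/\delta^2$ in $M_\delta^2$ originates entirely from the $d/(2\delta)$ prefactor in Definition~\ref{dfn:zo-est}, no genuinely high-dimensional construction is needed; a one-dimensional kink embedded into $\BR^d$ suffices. Concretely, I would take $F(x;\xi) = L\vert x_1 \vert$ for every realization of $\xi$, so that Assumption~\ref{asm:Lip} holds with $L(\xi) \equiv L$ (because $\vert\, \vert a \vert - \vert b \vert\,\vert \le \vert a - b \vert$), and I would choose $w = \ve_1$, the first standard basis vector, which lies on $\BS$.

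The first step is a direct computation of the estimator along this direction. For $x \in \BR^d$ with $\vert x_1 \vert < \delta$ one has $F(x + \delta \ve_1;\xi) - F(x - \delta \ve_1;\xi) = L\vert x_1 + \delta\vert - L\vert x_1 - \delta\vert = 2L x_1$, hence $g(x;\ve_1,\xi) = (dL/\delta)\, x_1\, \ve_1 = M_\delta x_1 \ve_1$. The second step is then immediate: taking $x = t_1 \ve_1$ and $y = t_2 \ve_1$ with $t_1, t_2 \in (-\delta,\delta)$ yields
\begin{align*}
\BE_\xi \Vert g(x;\ve_1,\xi) - g(y;\ve_1,\xi) \Vert^2 = M_\delta^2 (t_1 - t_2)^2 = M_\delta^2 \Vert x - y \Vert^2 ,
\end{align*}
so the constant $M_\delta = dL/\delta$ in Proposition~\ref{prop:avgL} cannot be decreased, and this already holds for pairs $x,y$ that are arbitrarily close. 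Combining this with the upper bound $M_\delta = \fO(dL\delta^{-1})$ established earlier shows the order $dL\delta^{-1}$ is tight.

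I do not expect a real obstacle in this argument. The only steps requiring mild care are the verification that $F(x;\xi) = L\vert x_1\vert$ satisfies Assumption~\ref{asm:Lip}, and the piecewise-linear evaluation of $\vert x_1 + \delta\vert - \vert x_1 - \delta\vert$: of the three regimes $x_1 \le -\delta$, $\vert x_1\vert < \delta$, and $x_1 \ge \delta$, only the middle one is used, where this quantity equals $2 x_1$ and thus gives the estimator a slope of exactly $M_\delta$ in the first coordinate. Restricting the test points to the slab $\{\vert x_1\vert < \delta\}$ is precisely what turns the bound into an equality rather than an inequality with a lost constant.
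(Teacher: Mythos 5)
Your construction is correct: $F(x;\xi)=L\vert x_1\vert$ satisfies Assumption~\ref{asm:Lip} with $L(\xi)\equiv L$, and for $w=e_1$ and $\vert x_1\vert<\delta$ the two finite differences $F(x+\delta e_1;\xi)-F(y+\delta e_1;\xi)$ and $F(x-\delta e_1;\xi)-F(y-\delta e_1;\xi)$ have opposite signs across the kink, so they add rather than cancel and you get $g(x;e_1,\xi)=M_\delta x_1 e_1$, hence exact equality $\BE_\xi\Vert g(x;e_1,\xi)-g(y;e_1,\xi)\Vert^2=M_\delta^2\Vert x-y\Vert^2$ for $x=t_1e_1$, $y=t_2e_1$ in the slab. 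This establishes tightness of $M_\delta=\fO(dL\delta^{-1})$ and your argument has no gaps.

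The paper's proof is the same in spirit (an explicit absolute-value instance) but uses $F(x;\xi)=\tfrac{L}{\sqrt d}\sum_i\vert x_i\vert$, $w=\tfrac{1}{\sqrt d}\vone$, and the antipodal points $x=\delta\vone$, $y=-\delta\vone$. Your choice of test points is actually the safer one: redoing the paper's computation, $F(x+\delta w;\xi)-F(x-\delta w;\xi)=2L\delta$, so $g(\pm\delta\vone;w,\xi)=\pm dLw=\pm L\sqrt d\,\vone$ (not $\pm Ld\,\vone$ as written there), giving $\Vert g(x;w,\xi)-g(y;w,\xi)\Vert=2dL$ against $M_\delta\Vert x-y\Vert=2d^{3/2}L$; that pair of points therefore only certifies a rate of order $\sqrt d L\delta^{-1}$. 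The full rate $dL\delta^{-1}$ is recovered precisely by doing what you did: keeping $x-y$ aligned with $w$ and placing both points inside the $\delta$-slab around the kink so the two shifted differences reinforce (the same fix also works for the paper's $\ell_1$-type function by taking $x=t_1w$, $y=t_2w$ with $\vert t_i\vert<\delta$). So your one-dimensional embedding is not only a valid alternative but yields exact equality and avoids the $\sqrt d$ loss in the paper's stated verification.
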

\begin{proof}
%First of all, we consider the case of $d=1$. We construct a simple function $f: \BR \rightarrow \BR$ for the case $d=1$ first when the left hand side is exactly the same as the right hand as in the inequality in Proposition \ref{prop:avgL} by letting $f(x) = F(x;\xi) = L \vert x \vert$ for any $\xi$. Then our claim  can be verified by choosing $x = \delta,y = -\delta$ and $w = 1$.{\color{red}(Is this paragraph necessary?)} 

We construct a function $f(x) = F(x;\xi) = \frac{L}{\sqrt{d}}\sum_{i=1}^d \vert x_i \vert$ for any random vector $\xi$. It is clear that each $F(x;\xi)$ is $L$-Lipschitz for any random index $\xi$ by noting:
\begin{align*}
    \vert F(x;\xi) - F(y;\xi) \vert = \frac{L}{\sqrt{d}} \vert \Vert x \Vert_1 - \Vert y \Vert_1 \vert \le \frac{L}{\sqrt{d}} \Vert x - y \Vert_1 \le L \Vert x - y \Vert_2.
\end{align*}
Choosing $x = \delta \vone$ and $y = -\delta \vone$, we know that $\Vert x - y \Vert_2 = 2 \sqrt{d} \delta$. Hence, we further know that ${\rm (a)} \triangleq M_{\delta} \Vert x- y \Vert_2 = 2 d^{3/2} L$. Next, we calculate the zeroth order gradient for $w = 1 / \sqrt{d} \cdot  \vone$ and verify that $g(x;w,\xi) =  L d \vone$. Similarly, we also have $g(y;w,\xi) =  -L d \vone$. Therefore, ${\rm (b)} \triangleq \Vert g(x;w,\xi) - g(y;w,\xi) \Vert_2 = 2 d^{3/2} L$. Finally, we complete the proof by noting that term (a) is exactly the same as term (b).

\end{proof}

\section{The Proof of Lemma \ref{lem:delem}}
\begin{proof}
We have
\begin{align*}
f_{\delta}(x_{t+1}) & \le f_{\delta}(x_t) + \nabla f_{\delta}(x_t)^\top (x_{t+1} - x_t) + \frac{L_{\delta}}{2} \Vert x_{t+1} - x_t \Vert^2  \\
&=  f_{\delta}(x_t) - \eta \nabla f_{\delta}(x_t)^\top  v_t + \frac{L_{\delta} \eta^2}{2} \Vert v_t \Vert^2  \\
&=  f_{\delta}(x_t) -  \frac{\eta}{2} \Vert \nabla f_{\delta}(x_t) \Vert^2  -\left( \frac{\eta}{2} - \frac{L_{\delta} \eta^2}{2} \right) \Vert v_t \Vert^2 + \frac{\eta}{2} \Vert v_t - \nabla f_{\delta}(x_t) \Vert^2,
\end{align*}
where the first inequality uses the $L_{\delta}$-smoothness of $f_{\delta}$ in Proposition \ref{prop:smooth}; 
the second line follows the update $x_{t+1} = x_t - \eta v_t$; 
the last step uses fact $2a^\top b = \Vert a \Vert^2 + \Vert b \Vert^2 - \Vert a - b \Vert^2$ for any $a,b\in\BR^d$.
\end{proof}

\section{The Proof of Lemma \ref{lem:vrb}}
\begin{proof}
The update of $v_{t+1}$ leads to
\begin{align*}
\BE \Vert v_{t+1} - \nabla f_{\delta}(x_{t+1}) \Vert^2 &= \BE \Vert v_{t} - g(x_t;S) + g(x_{t+1};S) - \nabla f_{\delta} (x_{t+1}) \Vert^2 \\
&= \BE \Vert v_t - \nabla f_{\delta}(x_t) \Vert^2 + \BE \Vert g(x_{t+1};S) - g(x_t;S) + \nabla f_{\delta}(x_t) - \nabla f_{\delta} (x_{t+1}) \Vert^2 \\
&\le \BE \Vert v_t - \nabla f_{\delta}(x_t) \Vert^2 + \frac{1}{b} \BE \Vert g(x_{t+1};w_1,\xi_1) - g(x_t;w_1,\xi_1) \Vert^2 \\
&\le \BE \Vert v_t - \nabla f_{\delta}(x_t) \Vert^2 + \frac{\eta^2 M_{\delta}^2}{b} \BE \Vert v_t \Vert^2,
\end{align*}
where the first line follows the update rule; 
the second line use the property of martingale~\citep[Proposition 1]{fang2018spider}; 
the first inequality uses the fact $\BE \left \Vert \sum_{i=1}^m a_i \right \Vert^2 = m \BE \Vert a_1 \Vert^2$ for any i.i.d random vector $a_1,a_2\cdots,a_m \in \BR^d$ with zero-mean and Definition \ref{dfn:bgrad}; the second inequality is obtained by Proposition \ref{prop:avgL} and the update $x_{t+1} = x_t - \eta v_t$.
\end{proof}

\section{The Proof of Lemma \ref{lem:epoch} and Corollary \ref{cor:epoch}}
\begin{proof}
We just need to plug the result of Corollary \ref{cor:vrb} into Lemma \ref{lem:delem}.
\end{proof}

\section{The Proof of Theorem \ref{thm:GFM+}}
\begin{proof}
The parameter settings of this theorem guarantee that the term $(*)$ in Corollary \ref{cor:epoch} be positive and we can drop it to obtain inequalities (\ref{eq:main}) and (\ref{eq:station}). 
Using the Jensen's inequality  $\BE \Vert \nabla f_{\delta}(x_{\rm out}) \Vert \le \sqrt{ \BE \Vert \nabla f_{\delta} (x_{\rm out}) \Vert^2 }$ and relationship $\nabla f_{\delta}(x_{\rm out}) \in \partial_{\delta}f (x_{\rm out})$ shown in Proposition \ref{prop:smooth}, we know that the output satisfies
$\BE  {\rm dist}(0, \partial_{\delta} f(x))  \leq \epsilon$.
The overall stochastic zeroth-order oracle complexity is obtained by plugging definitions in (\ref{dfn:const}) into $T \left( \left \lceil b'/m \right \rceil+ 2b \right)$.
\end{proof}

\section{The Proof of Theorem \ref{thm:upper-c}}

\begin{proof}
For any $x^* \in \fX^*$, it holds that
\begin{align*}
&\quad  \mathbb{E}[ f_{\delta}(x_t) - f_{\delta}(x^*)] \\
&\le \mathbb{E} \left[ \nabla f_{\delta}(x_t)^\top (x_t - x^*)\right] \\
&= \frac{1}{\eta} \mathbb{E} \left[ (x_t - x_{t+1} )^\top (x_t - x^*)  \right] \\
&= \frac{1}{2\eta} \mathbb{E}\left[ \Vert x_t - x^* \Vert^2- \Vert x_{t+1} - x^* \Vert^2 + \Vert x_{t+1} - x_t \Vert^2 \right]  \\
&\le \frac{1}{2\eta} \mathbb{E}\left[ \Vert x_t - x^* \Vert^2- \Vert x_{t+1} - x^* \Vert^2\right] + \frac{\eta \sigma_{\delta}^2}{2 },
\end{align*}
where the first inequality uses the convexity of $f_{\delta}(\,\cdot\,)$  and the second inequality uses Proposition \ref{prop:zo}. 
Combining the Jensen's inequality, we have
\begin{align} \label{eq:ws}
    \mathbb{E} [f_{\delta} (x_{\rm out}) - f_{\delta}(x^*)] \le \frac{ R^2}{2 \eta T} + \frac{\eta \sigma_{\delta}^2}{2} = \frac{R \sigma_{\delta}}{2 \sqrt{T}}.
\end{align}
Since Proposition \ref{prop:smooth} means $\Vert f - f_{\delta} \Vert_{\infty} \le L \delta$, the results of (\ref{eq:ws}) implies
\begin{align*}
    \BE[f(x_{\rm out}) - f^* ] \le   \mathbb{E} [f_{\delta} (x_{\rm out}) - f_{\delta}(x^*)] + 2 L \delta \le \frac{R \sigma_{\delta}}{2 \sqrt{T}} + 2 L \delta.
\end{align*}
\end{proof}

\section{Proof of Theorem \ref{thm:2GFM+} and \ref{thm:2GFM}}
\begin{proof}
For Algorithm \ref{alg:2GFM}, the complexity is given by
\begin{align*}
    \fO \left( \frac{d^{3/2} L^4}{\epsilon^4} + \frac{d^{3/2} L^4 \zeta }{\delta \epsilon^4} + \frac{d R^2 L^2}{\zeta^2} \right).
\end{align*}
For Algorithm \ref{alg:2GFM+}, the complexity is given by
\begin{align*}
    \fO \left( \frac{d^{3/2} L^3}{\epsilon^3} + \frac{d^{3/2} L^3 \zeta }{\delta \epsilon^3} + \frac{d R^2 L^2}{\zeta^2} \right).
\end{align*}
Then we tune $\zeta$ to achieve the best upper bound 
using the inequality $ 3\sqrt[3]{a^2 b} \le 2a \zeta + b /\zeta^2 $ for any constant $a>0,b>0$.
\end{proof}

\newpage

\section{The Details of Experiments} \label{apx:exp}

The details of the datasets for the experiments of nonconvex penalized SVM is shown in Table \ref{tab:data}.
The network used in the experiments of black box attack is shown in Table \ref{tab:LeNet}.

\begin{table}[t]
    \centering
    \caption{Summary of datasets used in our SVM experiments}    \label{tab:data}\vskip0.1cm
    \begin{tabular}{ccc}
    \hline
    Dataset & $n$ & $d$ \\ 
    \hline 
    a9a     & ~\,48,842 & 123 \\
    w8a     & ~\,64,700 & 300 \\
    covtype & 581,012 & ~54 \\
    ijcnn1 & ~\,49,990 & ~\,22 \\
    mushrooms & ~~\,8,142 & 112 \\
    phishing & ~\,11,055 & ~\,68 \\
    \hline
    \end{tabular}
\end{table}

\begin{table}[t]
    \centering
    \caption{Network used in the experiments}\label{tab:LeNet}\vskip0.1cm
    \begin{tabular}{c c}
    \hline
    Layer & Size   \\
    \hline  
    Convolution     & $5\times5\times16$ \\
    ReLU &  - \\
    Max Pooling     & $2\times2$ \\
    Convolution     & $5\times5\times16$ \\
    ReLU &  - \\
    Max Pooling     & $2\times2$ \\
    Linear & $3136\times128$  \\
    ReLU & - \\
    Linear & $128\times 10$
    \\
    \hline
    \end{tabular}
\end{table}

%You can have as much text here as you want. The main body must be at most $8$ pages long. For the final version, one more page can be added. If you want, you can use an appendix like this one, even using the one-column format.
%%%%%%%%%%%%%%%%%%%%%%%%%%%%%%%%%%%%%%%%%%%%%%%%%%%%%%%%%%%%%%%%%%%%%%%%%%%%%%%
%%%%%%%%%%%%%%%%%%%%%%%%%%%%%%%%%%%%%%%%%%%%%%%%%%%%%%%%%%%%%%%%%%%%%%%%%%%%%%%

\end{document}